\documentclass[10pt]{amsart}
\setlength{\parskip}{3pt}
\pdfoutput=1
\usepackage{mathtools}
\usepackage[mathscr]{eucal}
\usepackage{ifpdf}
\usepackage{url}
\usepackage{amsmath}
\usepackage{graphicx} \usepackage[all]{xy}
\usepackage{caption, subcaption}
\usepackage[usenames,dvipsnames]{color}
\usepackage{graphicx}
\usepackage[text={5.58in,8.5in},centering,letterpaper,dvips]{geometry}
\usepackage{amsfonts}
\usepackage{epsf}
\usepackage{amssymb}
\usepackage{amsmath}
\usepackage{amscd}
\usepackage{pdfpages}
\usepackage{fancyhdr}
\usepackage{setspace}
\usepackage[all]{xy}
\usepackage{verbatim}
\usepackage{enumerate}
\usepackage{pgf}
\usepackage{comment}
\usepackage[normalem]{ulem}
\usepackage{tikz}
\usetikzlibrary{
  cd,
  shapes,
  calc,
  positioning,
  arrows,
  decorations.pathreplacing,
  decorations.markings,
  intersections,
  patterns,
  backgrounds
}
\usepackage[latin1]{inputenc}
\usepackage[colorlinks=true, urlcolor=NavyBlue, linkcolor=NavyBlue, citecolor=NavyBlue]{hyperref}

\theoremstyle{theorem}
\newtheorem{theorem}{Theorem}[section]

\newtheorem{lemma}[theorem]{Lemma}
\newtheorem{corollary}[theorem]{Corollary}



\makeatletter
\newtheorem*{rep@theorem}{\rep@title}
\newcommand{\newreptheorem}[2]{%
\newenvironment{rep#1}[1]{%
 \def\rep@title{#2 \ref{##1}}%
 \begin{rep@theorem}}%
 {\end{rep@theorem}}}
\makeatother

\newreptheorem{theorem}{Theorem}
\newreptheorem{lemma}{Lemma}
\newreptheorem{claim}{Claim}
\newreptheorem{question}{Question}
\newreptheorem{corollary}{Corollary}
\newreptheorem{proposition}{Proposition}


\theoremstyle{definition}
\newtheorem{definition}[theorem]{Definition}

\newtheorem{remark}[theorem]{Remark}

\usepackage{accents}
\newlength{\dhatheight}

\topmargin = -.25in 
\textwidth = 6in
\textheight = 8.75in
\oddsidemargin = .25in
\evensidemargin = 0in

\author{Ryan Blair, Alexandra Kjuchukova and Ella Pfaff}

\begin{document}

\rhead{\thepage}
\lhead{\author}
\thispagestyle{empty}


\raggedbottom
\pagenumbering{arabic}
\setcounter{section}{0}


\title{The Plain Sphere Number of a Link}
\thanks{MSC codes: 57K10, 57K31, 57M05.}


\maketitle


\begin{abstract}
Let $L$ be a link in $S^3$. We consider the class of meridional presentations for $\pi_1(S^3\backslash L)$ in which the relations are witnessed by  embedded two-spheres which can be represented simultaneously in a fixed diagram of $L$, analogously to decomposition spheres studied by Cromwell, Menasco and others. Wirtinger relations are witnessed by such spheres and the Wirtinger presentation is a special case of the ones we study. We prove that the smallest number of generators of $\pi_1(S^3\backslash L)$ over all such presentations equals the bridge number of $L$.
	\end{abstract}

\section{Introduction}
We introduce a new definition of bridge number of a link. The definition arises in the study of the Meridional Rank Conjecture (MRC), which asks whether the bridge number of a link $L$ equals the smallest number of meridional generators of $\pi_1(S^3\backslash L)$.
The conjecture, posed by Cappell and Shaneson~\cite[Problem 1.18]{kirby1995problems}, has been established in a variety of cases~\cite{boileau1985nombre, rost1987meridional, burde1988links, boileau1989orbifold, boileau2017meridionalrank, CH14, Corn14, baader2019coxeter, baader2017symmetric, baader2023bridge, Dutra22}. The analogous statement is also shown to hold for some knotted spheres in $S^4$~\cite{joseph2024meridional}. 

In~\cite{blair2020wirtinger}, it is shown that the bridge number equals the minimal number of meridional generators over all presentations which allow only iterated Wirtinger relations in a fixed diagram. Our main result is that equality persists after significantly generalizing the presentations considered. 

Denote the \emph{bridge number} and \emph{meridional rank} of a link $L$ by $\beta(L)$ and $\mu(L)$, respectively. 
In \cite{blair2020wirtinger}, the authors introduce a new invariant, the \emph{Wirtinger number} of a link, denoted $\omega(L)$. The Wirtinger number is defined in terms of a coloring procedure carried out in a fixed diagram of $L$, see Definition~\ref{def:Wirtcolor}. Performing a coloring move at a crossing $c$ reflects the fact that the Wiritnger meridians of the overstrand and one understrand at $c$ generate the Wirtinger meridian of the second understrand at $c$. Hence, a valid coloring sequence for $D$ demonstrates that the Wirtinger meridians of the initially colored strands, or seeds (Definition~\ref{def:seeds}), generate $\pi_1(S^3\backslash L)$.

By starting with a diagram in minimal bridge position and choosing the strands containing the local maxima as seeds, we easily see that $\beta(L) \geq \omega(L)$; moreover, by definition, $\omega(L) \geq \mu(L)$. In \cite{blair2020wirtinger}, it is shown that in fact $\beta(L) = \omega(L)$. We now prove that the bridge number equals the smallest number of meridional generators of $\pi_1(S^3\backslash L)$ across a considerably more general class of meridional presentations. 

The {\it plain sphere number} of a link $L$, denoted $\rho(L)$ (Definition~\ref{def:loopcolor}), is the smallest number of Wirtinger meridians which generate $\pi_1(S^3\backslash L)$ using only relations witnessed by certain embedded two-spheres which we now describe. Consider a link diagram $D$, and let $\gamma$ denote an embedded circle in the plane of projection such that $\gamma$ avoids neighborhoods of crossings and meets the interior of strands of $D$ transversely. Further assume that $\gamma$ meets $D$ in exactly $n$ points and that, of those, precisely one is contained in a given strand, $s$, of $D$. Then, the Wirtinger meridians of the remaining strands which meet $\gamma$ generate the Wirtinger meridian of $s$. To see this, cap off the simple closed curve $\gamma$ with two disks, $D^2_\pm$, whose interiors are disjoint from the plane of projection and such that $D^2_+$ (resp. $D^2_-$) is above (resp. below) the plane. We refer to $D^2_+\cup_\gamma D^2_-$ as a {\it plain sphere}---it is indeed in plain sight---and we say that the sphere witnesses a relation in the group. That is, the product of (appropriately oriented) Wirtinger meridians of strands intersecting $\gamma$, taken in the order determined by $\gamma$, is trivial. The Wirtinger relation at any crossing $c$ corresponds to a circle $\gamma$ equal to the boundary of a small planar neighborhood of $c$. 
Clearly, $\beta(L) = \omega(L)\geq \rho(L) \geq \mu(L)$. 

\begin{theorem}\label{thm:main}
    Let $L$ be a link in $S^3$. The plain sphere number of $L$ equals the bridge number of $L$.
\end{theorem}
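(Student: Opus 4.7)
Since $\beta(L) = \omega(L) \geq \rho(L) \geq \mu(L)$ is established in the introduction, the remaining task is to prove $\rho(L) \geq \beta(L)$. My plan is to establish the equivalent statement $\rho(L) \geq \omega(L)$: given any plain sphere coloring of a diagram $D$ of $L$ using $k$ seeds, produce a Wirtinger coloring of some diagram $D'$ of $L$ using the same $k$ seeds. The theorem of \cite{blair2020wirtinger} that $\omega(L) = \beta(L)$ then yields $\beta(L) \leq k$, completing the proof.

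The core technical step is a subdivision lemma for plain sphere moves. Suppose a plain sphere move is witnessed by a circle $\gamma$ meeting $D$ transversely in $n \geq 4$ points, with exactly one point on an uncolored strand $s$. The plan is to modify $D$ via planar isotopy and Reidemeister moves supported inside the disk bounded by $\gamma$ so that this single move is replaced by a sequence of plain sphere moves, each using a circle that meets the modified diagram in strictly fewer points. Iterating reduces every move to one of two trivial forms: $n=3$ (essentially a Wirtinger move encircling a single crossing) or $n=2$ (identifying meridians at two intersections of $\gamma$ with a single strand). The end result is a Wirtinger coloring with the same seeds.

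To construct the subdivision, I would first isotope the arcs inside the disk so that the $n-1$ colored strand ends on $\gamma$ cluster near the endpoint of $s$ on $\gamma$, then introduce a nested family of smaller circles $\gamma_1, \gamma_2, \dots$ around that endpoint. Each $\gamma_j$ meets $s$ once and encloses a growing subset of the colored strand ends, so each witnesses a valid plain sphere move; together they color $s$ incrementally. When the interior of the disk is cluttered with crossings between colored arcs or with strands belonging to other components, Reidemeister II moves and component isotopies must first be applied to push the obstructions aside before the clustering can be realized.

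The main obstacle I anticipate is maintaining global consistency across the coloring sequence: subdividing one plain sphere move alters the diagram inside its disk, which could in principle affect intersection patterns relevant to other moves---especially when different moves' circles are nested or overlap. To contain this, I would process the moves in the given order and ensure that every modification is supported strictly inside the corresponding disk, leaving $\gamma$ and its exterior untouched so that later circles see the changes only as a black box. The most delicate bookkeeping will concern multi-component links whose components lie entirely inside a disk, and situations where tangling inside a disk prevents the clustering step from being carried out without first invoking auxiliary plain sphere moves; I expect this interplay between local modification and the ambient coloring order to be the principal technical hurdle.
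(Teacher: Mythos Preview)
Your overall strategy---reduce plain sphere colorings to Wirtinger colorings and invoke \cite{blair2020wirtinger}---matches the paper's. But the core technical step you propose, the ``subdivision lemma,'' does not work as stated and misses the main difficulty.

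The problem is with the clustering step. Sliding the $n-1$ colored endpoints along $\gamma$ until they are adjacent is harmless, but it does nothing to the tangle $T$ sitting inside the disk $E$ bounded by $\gamma$. Your nested circles $\gamma_j$ would still have to cross through $T$, and there is no reason each $\gamma_j$ meets the uncolored strand $s$ in a single point, nor that the portion of $D$ inside $\gamma_j$ is any simpler. More seriously, ``pushing obstructions aside with Reidemeister II moves'' cannot in general untangle $T$: the tangle inside $E$ is a priori an arbitrary $m$-strand tangle, and no amount of local Reidemeister manipulation turns an arbitrary tangle into one where your nested-circle scheme applies. Your endpoint targets are also off: a loop with $|L\cap D|=3$ need not encircle a single crossing, and $|L\cap D|=2$ with both points on one strand colors nothing.

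What the paper does instead is structural. First the loops are made pairwise disjoint (Lemma~\ref{lem:disjoint-loops}), then all innermost loops are shown to be replaceable by Wirtinger loops (Lemma~\ref{lem:innermost-loops}), and seeds are pushed out of every disk (Lemma~\ref{lem:no-seeds-inside}). The decisive step is Lemma~\ref{lem:eliminate-depth-two}: once a depth-two loop $L_i$ contains only Wirtinger loops and no seeds in its disk $E_i$, the tangle $T=L\cap(E_i\times[0,1])$ is proved to be \emph{rational}. This is not elementary---it is obtained by doubling $T$, observing that the doubled diagram is Wirtinger $2m$-colorable, and invoking Theorem~1.2 of \cite{blair2019incompatibility} to put the double in bridge position with $2m$ maxima, which forces each arc of $T$ to have a unique minimum. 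Only because $T$ is rational can one isotope it (Remark~\ref{rem:tangle_diagram}) into the plat form of Figure~\ref{fig:braid_isotopy} and then pull one arc free so that everything inside $E_i$ is colorable by Wirtinger moves. This rationality is precisely the obstruction your ``push aside'' step cannot overcome without it, and it is the idea your plan is missing.
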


The theorem is proved in Section~\ref{sec:Proof}. In Section~\ref{sec:Preliminaries} we give the formal definition of $\rho(L)$, recall the definition of $\omega(L)$, and establish some terminology. The short Section~\ref{sec:Examples} provides an example contrasting the plain sphere and Wirtinger numbers of a diagram, and describes a procedure for computing $\rho(D)$ for a fixed diagram. In Section~\ref{sec:poem} we draw additional figures.

\section{Preliminaries} \label{sec:Preliminaries}

 Recall that if $L$ is a link in $\mathbb{R}^3$ and $P:\mathbb{R}^3\rightarrow \mathbb{R}^2$ is the standard projection map given by $P(x,y,z)=(x,y)$, then $P(L)$ is a \textit{link projection} if $P|_L$ is a regular projection. Hence, a link projection is a finite four-valent graph in the plane, and we refer to the vertices of this graph as crossings. A \textit{link diagram} is a link projection together with labels at each crossing that indicate which strand goes over and which goes under. By standard convention, these labels take the form of deleting parts of the under-arc at every crossing, and thus we think of a link diagram as a disjoint union of closed arcs, or \emph{strands}, in the plane, together with instructions for how to connect these strands to form a union of simple closed curves in $\mathbb{R}^3$. At times we will refer to the link projection $P(D)$ corresponding to a given link diagram $D$, where $P(D)$ is obtained from $D$ by forgetting the under and over information at crossings.

Let $D$ be a diagram of a link $L$ with $n$ crossings.
Denote by $s(D)$ the set of strands $s_1$, $s_2$, \dots , $s_n$ and let $v(D)$ denote the set of crossings $c_1$, $c_2$, \dots, $c_n$. Two strands $s_i$ and $s_j$ of $D$ are {\it adjacent} if $s_i$ and $s_j$ are the understrands of some crossing in $D$. The diagram of the unknot with a single crossing is the unique knot diagram up to planar isotopy for which there exists a strand $s_i$ of $D$ that is adjacent to itself. In the case of links, many diagrams contain a strand adjacent to itself; the standard diagram of the Hopf link is one example.

\begin{definition}\label{def:Wirtcolor}
    We call $D$ \textit{Wirtinger $k$-colorable} if we have specified a set $A$ of strands of $D$ and a nested sequence of subsets $A=A_0\subset A_1 \subset \dots \subset A_{|s(D)|-|A|}=s(D)$ such that the following hold:
\begin{enumerate}
   \item $|A|=k$;
    \item $A_{i+1} \setminus A_{i} = \{s_j\}$ for some strand $s_j$ in $D$;
    \item Whenever $A_{i+1} \setminus A_{i} = \{s_j\}$, $s_j$ is adjacent to some $s_i \in A_i$ at a crossing $c \in v(D)$;
    \item The overstrand $s_k$ at $c$ is an element of $A_i$.
\end{enumerate}
\end{definition}

When $D$ is $k$-colorable in the above sense, the Wirtinger meridians of the strands in $A$ generate all Wirtinger meridians in the diagram, using only Wirtinger relations at crossings.

\begin{definition}\label{def:loopcolor}
    We call $D$ \textit{plain sphere $k$-colorable} if we have specified a set $A$ of strands of $D$ and a nested sequence of subsets $A=A_0\subset A_1 \subset \dots \subset A_{|s(D)|-|A|}=s(D)$ such that the following hold:
\begin{enumerate}
    \item $|A|=k$;
    \item $A_{i+1} \setminus A_i = \{s_j\}$ for some strand $s_j$ in $D$;
    \item \label{item:loop-move} Whenever $A_{i+1} \setminus A_i = \{s_j\}$, there exists an embedded circle $L_{i+1}$ in the plane of projection such that: $L_{i+1}$ is transverse to the projection $P(D)$; $L_{i+1}$ is disjoint from small neighborhoods of crossings in $D$; $|L_{i+1}\cap s_j|=1$; and the points of intersection between $L_{i+1}$ and strands of $D$ are all contained in $A_{i+1}$.
\end{enumerate}
We refer to the circles $L_i$ as {\it loops}. 
\end{definition}
Condition~\ref{item:loop-move} precisely guarantees that the Wirtinger meridian of $s_j$ is generated by the Wirtinger meridians of the remaining strands of $D$ that have non-trivial intersection with $L_{i+1}$. This is expressed by a relation in $\pi_1(S^3\backslash L)$ and is witnessed by a plain sphere as defined in the introduction: an embedded two-sphere $S^2_j$ which intersects the plane of projection in the loop $L_{i+1}$. The existence of a valid coloring sequence as above shows that the Wirtinger meridians of the strands in $A$ generate all Wirtinger meridians in the diagram, using only relations witnessed by plain two-spheres. 
\begin{remark}
We could almost regard the spheres $S^2_j$ in the above description as simultaneously and disjointly embedded in $S^3$. Indeed we will show later that the loops $L_i$ can be assumed pairwise non-intersecting. And, each $S^2_j$ is the boundary union of two embedded disks, on opposite sides of the plane of projection, whose interiors do not meet. However, since the upper disk $D^2_{j+}$ of each $S^2_j$ contains the basepoint, the spheres do, in fact, intersect at a point.     
\end{remark}

\begin{definition}\label{def:seeds}
    When a diagram $D$ can be colored by a valid sequence of Wiritinger coloring moves (resp. plain sphere coloring moves) as in Definition~\ref{def:Wirtcolor} (resp. Definition~\ref{def:loopcolor}), the elements of $A$ are called the {\it seed strands}, or simply {\it seeds}, for the coloring.
\end{definition}

The minimum value of $k$ such that $D$ is Wirtinger $k$-colorable is the {\it Wirtinger number} of $D$, denoted $\omega(D)$. Similarly, the minimum value of $k$ such that $D$ is plain sphere $k$-colorable is the {\it plain sphere number} of $D$, denoted $\rho(D)$. We use $\omega(D)$ and $\rho(D)$ to define invariants of $L$. 

\begin{definition}
    \label{omega}  Let $L\subset S^3$ be a link. The {\it Wirtinger number} of $L$, denoted $\omega(L)$, is the minimal value of $\omega(D)$ over all diagrams $D$ of $L$. Similarly, the {\it plain sphere number} of $L$, denoted $\rho(L)$, is the minimal value of $\rho(D)$ over all diagrams $D$ of $L$.
\end{definition}

We now define a number of auxiliary terms related to Definitions \ref{def:Wirtcolor}, \ref{def:loopcolor} and \ref{omega}. These terms will be used extensively in the sections that follow. If $D$ is a Wirtinger $k$-colorable diagram with $A_{i} \setminus A_{i-1} = \{s_j\}$, we say that the strand $s_j$ is colored at \emph{stage} $i$ by a \emph{Wirtinger coloring move}. Similarly, if $D$ is a plain sphere $k$-colorable diagram with $A_i \setminus A_{i-1} = \{s_j\}$, we say that the strand $s_j$ is colored at \emph{stage} $i$ by a \emph{plain sphere coloring move} or, for short, a {\it loop coloring move}. A \emph{partially plain sphere colored} link diagram $D$ is a nested collection of sets  $A=A_0\subset A_1 \subset \dots \subset A_{r}$ that meet all of the requirements of Definition \ref{def:loopcolor} with the exception that $A_r$ may be a proper subset of $s(D)$. Additionally, if $D$ is a plain sphere $k$-colorable diagram, then a \emph{plain sphere coloring sequence} for $D$ is an ordered set of loops $\mathcal{L}=(L_1, L_2, \dots, L_{|s(D)|-k})$ that are as in~(\ref{item:loop-move}) of Definition \ref{def:loopcolor}. Finally, if $L_i$ is a loop in 
 a plain sphere coloring sequence for $D$ that is isotopic (via an isotopy that is transverse to the link projection) to the boundary of a regular neighborhood of a crossing of $D$, then we call $L_i$ a \emph{Wirtinger loop}. Clearly, any plain sphere coloring move performed using a Wirtinger loop is also achievable via a Wirtinger coloring move. In other words, a Wirtinger coloring sequence is merely a special case of a plain sphere coloring sequence. This shows that $\rho(D)\leq \omega(D)$ for all link diagrams $D$, and hence $\rho(L)\leq \omega(L)$ for all links $L$ in $S^3.$

An easy example of a plain sphere coloring move which is not a Wirtinger move can be found in any diagram which is not visually prime. Namely, the connected sum sphere is a plain sphere intersecting two strands whose meridians cobound an annulus in the link complement. Two additional plain sphere coloring moves which are not Wirtinger moves are depicted in Figure  \ref{fig:got-the-moves}. In Figure~9 of \cite{blair2019incompatibility}, the reader can find an example of a minimal diagram $D$ of a composite knot $K$ such that $\rho(D)=\beta(D)=5$ while $\omega(D)$=6. A diagram $D$ of a prime knot with $\rho(D)<\omega(D)$ is given in Figure~\ref{fig:14n1527_colored}. In particular, this exhibits a plain sphere move which is neither a Wirtinger move nor a loop arising from a connected sum sphere. For a family of plain sphere moves see Figure~\ref{fig:got-the-moves}.

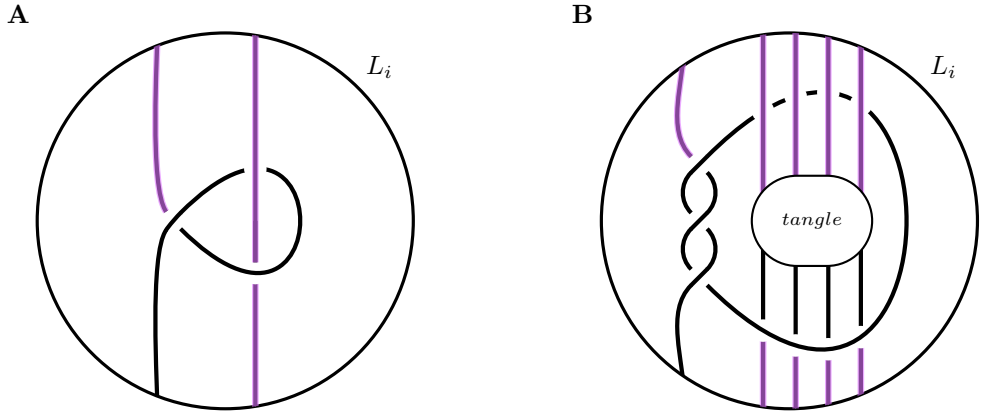
\begin{figure}[h]
    \centering

        \begin{tikzpicture}
        [x=1mm,y=1mm, ultra thick, 
        help lines/.style={very thin, draw=white},
        crossing/.style={circle, draw, white, fill=white, minimum size=5pt, node font=\tiny},
        colored/.style={line width=2.5pt, draw=lightpurple, draw opacity=0.6}]
        \definecolor{lightpurple}{HTML}{DF73FF}

        \draw[white] (-27.5,0) -- (102.5,0);
        \draw[very thick] (0,0) circle (25) node at (45:29) {$L_i$};
        \draw[very thick, xshift=75mm] (0,0) circle (25) node at (45:29) {$L_i$};
        \node at (-27.5,27.5) {\textbf{A}};
        \node[xshift=75mm] at (-27.5,27.5) {\textbf{B}};

        \begin{scope}[xshift=75mm]
            \clip (0,0) circle (25);
            
            \draw[name path=h1, help lines] (-3.5,-25) -- (-3.5,25);
            \draw[name path=h2, help lines] (0.83,-25) -- (0.83,25);
            \draw[name path=h3, help lines] (5.17,-25) -- (5.17,25);
            \draw[name path=h4, help lines] (9.5,-25) -- (9.5,25);

            \node[crossing] (1) at (-12,7.5) {};
            \node[crossing] (3) at (-12,-7.5) {}
                edge[name path=tcurve, help lines, bend right=135, distance=50mm] (1);
            
            \path[name intersections={of=tcurve and h1,by={b1,t1}}];
            \path[name intersections={of=tcurve and h2,by={b2,t2}}];
            \path[name intersections={of=tcurve and h3,by={b3,t3}}];
            \path[name intersections={of=tcurve and h4,by={b4,t4}}];

            \draw (-3.5,-25) -- (-3.5,-3)
            (0.83,-25) -- (0.83,-3)
            (5.17,-25) -- (5.17,-3)
            (9.5,-25) -- (9.5,-3);
            \draw[colored] (-3.5,-25) -- (b1)
            (0.83,-25) -- (b2)
            (5.17,-25) -- (b3)
            (9.5,-25) -- (b4);
            \node[crossing] at (b1) {};
            \node[crossing] at (b2) {};
            \node[crossing] at (b3) {};
            \node[crossing] at (b4) {};

            \node[crossing] (2) at (-12,0) {};
            \node[crossing] (1) at (-12,7.5) {}
                edge[out=315, in=45] (2)
                edge[bend right=45] (2);
            \node[crossing] (3) at (-12,-7.5) {}
                edge[bend right=45] (2)
                edge[bend left=45] (2)
                edge[bend right=135, distance=50mm] (1);
            \node[crossing] (4) at (-14,25) {}
                edge[out=270, in=135, distance=5mm] (1)
                edge[colored, out=270, in=135, distance=5mm] (1);
            \node[crossing] (5) at (-14,-25) {}
                edge[out=90, in=225, distance=5mm] (3);
            \draw[shift=(1)] (225:4.5pt) -- (45:4.5pt);
            \draw[shift=(2)] (225:4.5pt) -- (45:4.5pt);
            \draw[shift=(3)] (225:4.5pt) -- (45:4.5pt);
            
            \node[crossing] at (t1) {};
            \node[crossing] at (t2) {};
            \node[crossing] at (t3) {};
            \node[crossing] at (t4) {};
            \draw (-3.5,3) -- (-3.5,25)
            (0.83,3) -- (0.83,25)
            (5.17,3) -- (5.17,25)
            (9.5,3) -- (9.5,25);
            \draw[colored] (-3.5,3) -- (-3.5,25)
            (0.83,3) -- (0.83,25)
            (5.17,3) -- (5.17,25)
            (9.5,3) -- (9.5,25);
            \draw[thick, fill=white, rounded corners=6mm] (-5,-6) rectangle (11,6) node[midway] {$\scriptstyle tangle$};
        \end{scope}

        \begin{scope}[xshift=3mm]
            \clip (-3,0) circle (25);
            \draw[name path=h1, help lines] (1,-25) -- (1,25);

            \node[crossing] (1) at (-10,0) {}
                edge[name path=rcurve, help lines, out=315, in=45, distance=30mm] (1);

            \path[name intersections={of=rcurve and h1,by={b1,t1}}];

            \draw (1,-25) -- (1,0);
            
            \draw[colored] (1,-25) -- (b1);
            
            \node[crossing] at (b1) {};
            \filldraw[name path=bottomcircle, very thin, white] (b1) circle (4pt);
            \path[name intersections={of=bottomcircle and h1,by={p1,p2}}];

            \node[crossing] (1) at (-10,0) {}
                edge[out=315, in=45, distance=30mm] (1);
            \node[crossing] (4) at (-12,27.5) {}
                edge[out=270, in=125, distance=4mm] (1)
                edge[colored, out=270, in=125, distance=4mm] (1);
            \node[crossing] (5) at (-12,-27.5) {}
                edge[out=90, in=235, distance=4mm] (1);
            \draw[shift=(1)] (235:4.5pt) .. controls +(60:2pt) and ++(225:2pt) .. (45:4.5pt);

            \node[crossing] at (t1) {};
            
            \draw (p1) -- (1,25);
            
            \draw[colored] (p1) -- (1,25);
            
        \end{scope}
    \end{tikzpicture}

    \caption{Possible plain sphere coloring moves at stage $i$ of a coloring process which do not reduce to sequences of Wirtinger moves in the given tangle diagrams. {\bf A} is the move used in Figure~\ref{fig:14n1527_colored}. {\bf B} is a generalization. The oval contains an arbitrary $n$-strand tangle, pictured in the case $n=4$. The remaining tangle strand can be replaced by any one-strand tangle; the move will remain valid as long as all but one of the points in $L_i\cap D$ are colored before stage $i$.}
    \label{fig:got-the-moves}
\end{figure}

\section{Proof}\label{sec:Proof}

We prove Theorem \ref{thm:main} by showing that the plain sphere number of $L$ equals the Wirtinger number of $L$. The result then follows from~\cite{blair2020wirtinger}, where it is shown that the Wirtinger number and bridge number are equal. The equality $\rho(L)=\omega(L)$ relies on the following lemmas.

\begin{figure}[h]
    \centering

    \begin{tikzpicture}
        [x=1mm,y=1mm, thick, 
        help lines/.style={very thin, draw=white}]
        \definecolor{lightpurple}{HTML}{DF73FF}
        \definecolor{darkpurple}{HTML}{9932CC}
        \definecolor{darkblue}{HTML}{003366}
        \definecolor{pink1}{HTML}{FF1493}
        
        \begin{scope}
        \clip (0,0) circle (33);
        \coordinate (o0) at (0,0);
        \coordinate (o1) at (94,25);
        \coordinate (o2) at (24.5,-31.5);
        \coordinate (o3) at (15,-84);
        \coordinate (o4) at (6,9);
        \coordinate (o5) at (-13,-6);

        \draw[name path=h0, help lines] (0,0) circle (23.75);
        \draw[name path=h1, help lines] (o1) circle (91);
        \draw[name path=h2, help lines] (o2) circle (36);
        \draw[name path=h3, help lines] (o3) circle (71);
        \draw[name path=h4, help lines] (o4) circle (7);
        \path[name intersections={of=h0 and h1,by={star0,star0'}}];
        \path[name intersections={of=h1 and h2,by={star1, star1'}}];
        \path[name intersections={of=h2 and h3,by={star2',star2}}];
        \path[name intersections={of=h3 and h0,by={star3',star3}}];
        \path[name intersections={of=h1 and h4,by={star4,star4'}}];
        \end{scope}

        \draw[name path=c0] (0,0) circle (25);

        \begin{scope}
        \clip (0,0) circle (28);
        \draw[name path=c1] (o1) circle (90);
        \draw[name path=c2] (o2) circle (35);
        \draw[name path=c3] (o3) circle (70);
        \draw[name path=c4] (o4) circle (6);
        \path[name intersections={of=c0 and c1,by={A,A'}}];
        \path[name intersections={of=c1 and c2,by={B,B'}}];
        \path[name intersections={of=c2 and c3,by={C',C}}];
        \path[name intersections={of=c3 and c0,by={D',D}}];
        \path[name intersections={of=c4 and c1,by={E,E'}}];

        \draw[fill=black!05] 
        let \p1 = ($ (o1) - (A) $),
            \p2 = ($ (o1) - (B) $),
            \p3 = ($ (o2) - (B) $),
            \p4 = ($ (o2) - (C) $),
            \p5 = ($ (o3) - (C) $),
            \p6 = ($ (o3) - (D) $),
            \p7 = (D),
            \p8 = (A),
            \p9 = ($ (o4) - (E) $),
            \p{10} = ($ (o4) - (E') $),
            \p{11} = ($ (o1) - (E) $),
            \p{12} = ($ (o1) - (E') $)
        in  (A) arc (atan2(\y1,\x1)+180:atan2(\y{11},\x{11})+180:90) -- (E) arc (atan2(\y9,\x9)+180:atan2(\y{10},\x{10})+180:6) -- (E') arc (atan2(\y{12},\x{12})+180:atan2(\y2,\x2)+180:90) -- (B) arc (atan2(\y3,\x3)+180:atan2(\y4,\x4)+180:35) --(C) arc (atan2(\y5,\x5)+180:atan2(\y6,\x6)+180:70) -- (D) arc (atan2(\y7,\x7)+360:atan2(\y8,\x8):25) -- cycle;

        \draw[dashed, line join=round] 
        let \p1 = ($ (o1) - (star0) $),
            \p2 = ($ (o1) - (star1) $),
            \p3 = ($ (o2) - (star1) $),
            \p4 = ($ (o2) - (star2) $),
            \p5 = ($ (o3) - (star2) $),
            \p6 = ($ (o3) - (star3) $),
            \p7 = (star3),
            \p8 = (star0),
            \p9 = ($ (o4) - (star4) $),
            \p{10} = ($ (o4) - (star4') $),
            \p{11} = ($ (o1) - (star4) $),
            \p{12} = ($ (o1) - (star4') $)
        in  
        (star0) arc (atan2(\y1,\x1)+180:atan2(\y{11},\x{11})+180:91) -- (star4) arc (atan2(\y9,\x9)+180:atan2(\y{10},\x{10})+180:7) -- (star4') arc (atan2(\y{12},\x{12})+180:atan2(\y2,\x2)+180:91) -- (star1) arc (atan2(\y3,\x3)+180:atan2(\y4,\x4)+180:36) -- (star2) arc (atan2(\y5,\x5)+180:atan2(\y6,\x6)+180:71) -- (star3) arc (atan2(\y7,\x7)+360:atan2(\y8,\x8):23.75) -- cycle;
        \end{scope}

        \draw[name path=c5, fill=white] (o5) circle (4.25);

        \draw[name path=uncolored, ultra thick] (150:21) -- (150:29);

        \draw[ultra thick] 
        let \p9 = (o2),
            \p0 = (o1)
        in
        (97:21) -- (97:29)
        (197:21) -- (197:29)
        (357:21) -- (357:29)
        (61:21) -- (61:29)
        (69:21) -- (69:29)
        {[xshift=\x9,yshift=\y9](139:31) -- (139:39)}
        {[xshift=\x0,yshift=\y0](203:86) -- (203:94)};
        \draw[line width=2.5pt, lightpurple, draw opacity=0.6] 
        let \p9 = (o2),
            \p0 = (o1)
        in
        (97:20.75) -- (97:29.25)
        (197:20.75) -- (197:29.25)
        (357:20.75) -- (357:29.25)
        (61:20.75) -- (61:29.25)
        (69:20.75) -- (69:29.25)
        {[xshift=\x9,yshift=\y9](139:30.75) -- (139:39.25)}
        {[xshift=\x0,yshift=\y0](203:85.75) -- (203:94.25)};

        \draw[line width=2.5pt, draw opacity=0.4] let \p7 = (D), \p8 = (A) 
        in (D) arc (atan2(\y7,\x7)+360:atan2(\y8,\x8):25);
        \path[name intersections={of=c0 and uncolored,by=x}];
        \filldraw (x) circle (2pt);
        \node at ($(x)+(195:3.5)$) {$x$};
        \node at (-46:27.5) {$\mathcal L$};
        \node at (45:29) {$L_i$};
        \node at (125:27.5) {$a$};
        \node at ($(o4)+(145:10.75)$) {\textcolor{black!80}{$L_i^*$}};
        \node at (159:11.25) {\textcolor{black!50}{$E_i^*$}};
      
    \end{tikzpicture}
    
    \caption{A region in the plane containing a link diagram at stage $i$ of the coloring process. Subarcs of the diagram are represented by short line segments, and loops in $\mathcal L$ are represented by circles and arcs. The loop $L_i$, which colors the strand containing the point $x$, can be replaced with the loop $L_i^*$ (dashed line), which is disjoint from all other loops in $\mathcal L$ and represents a valid plain sphere coloring move at stage $i$ for the same strand. 
    }
    \label{fig:intersecting_loops}
\end{figure}
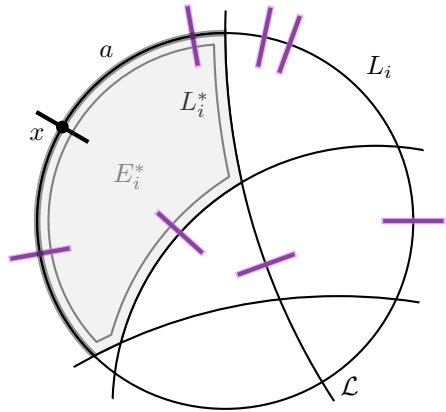

\begin{lemma}\label{lem:disjoint-loops} Let $D$ be a link diagram. The plain sphere number of $D$ can be realized by a collection of disjoint circles. That is, if $\rho(D)=n$, then there exists a set of $n$ seed strands in $D$, together with an ordered set $\mathcal{L}=(L_1, \dots, L_{|s(D)|-n})$ of disjointly embedded circles, each transverse to $D$ and disjoint from small neighborhoods of all crossings, such that $\mathcal{L}$ defines a valid plain sphere coloring sequence for $D$.
\end{lemma}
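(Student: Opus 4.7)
I propose to induct on the total pairwise intersection number $c(\mathcal L):=\sum_{i<k}|L_i\cap L_k|$ among plain sphere coloring sequences $\mathcal L=(L_1,\ldots,L_m)$ of $|s(D)|-n$ loops realizing $\rho(D)=n$. Choose $\mathcal L$ minimizing $c(\mathcal L)$ and assume for contradiction that $c(\mathcal L)>0$. Let $s_i$ denote the strand colored at stage $i$ of $\mathcal L$, and $A_{i-1}$ the set of strands colored before stage $i$. Fix a stage $i$ at which $L_i$ meets some other loop of $\mathcal L$, and let $x=L_i\cap s_i$ be the unique intersection of $L_i$ with $s_i$. The goal is to construct a replacement simple closed curve $L_i^*$ that (a) is disjoint from every other loop in $\mathcal L$, (b) meets $s_i$ in a single point (near $x$), and (c) meets $D$ elsewhere only along strands of $A_{i-1}$. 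Replacing $L_i$ by $L_i^*$ then produces a valid plain sphere coloring sequence of strictly smaller $c$-value, contradicting minimality.

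Following Figure~\ref{fig:intersecting_loops}, I build $L_i^*$ as a transverse perturbation of $\partial R_i^*$, where $R_i^*$ is obtained from the disk bounded by $L_i$ by iteratively adjoining each protruding subdisk of a loop $L_k\in\mathcal L$ that meets $L_i$ and excising any nested subdisks that would otherwise re-introduce intersections with $\mathcal L$. Because $\mathcal L$ is finite, the iteration terminates, and a small transverse perturbation places $\partial R_i^*$ off the arrangement $\bigcup_j L_j$. Properties (a) and (b) are immediate from the construction: $L_i^*$ has been pushed off every loop of $\mathcal L$, and the arc of $L_i$ containing $x$ is preserved in $\partial R_i^*$. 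The curve $L_i^*$ also stays away from small neighborhoods of the crossings of $D$.

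The main obstacle is verifying (c). Any new intersection point of $L_i^*$ with $D$ arises from an arc of some loop $L_k\neq L_i$ incorporated into $\partial R_i^*$. When $k<i$ the corresponding strand is already in $A_{i-1}$, as required. When $k>i$ the strand lies in $A_k\cup\{s_k\}$, which need not be contained in $A_{i-1}$, so condition (c) can fail. To resolve this, I exploit the freedom to reorder the coloring sequence: whenever $L_k$ with $k>i$ contributes an arc to $L_i^*$, I argue that the coloring step for $L_k$ can be moved to a stage preceding $i$. The reordering is checked to preserve validity by a cascading argument on the dependency structure between stages (each step of the cascade pushes a loop earlier, and the cascade terminates since $\mathcal L$ has finitely many loops, after which every strand of $L_k\cap D$ appearing on $L_i^*$ has been pulled into $A_{i-1}$). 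Once every offending $L_k$ has been relocated, (c) is satisfied, the replacement $L_i\mapsto L_i^*$ is valid, and the induction closes to give a pairwise disjoint collection realizing $\rho(D)$.
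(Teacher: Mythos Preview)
Your overall strategy---minimize the total intersection count and replace an offending loop---matches the paper's, but your execution has a genuine gap at the step you yourself flag as ``the main obstacle.''

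The paper avoids your difficulty entirely by a single choice you omit: it picks $L_i$ to be the \emph{last} loop in the ordered sequence that meets any other loop. With that choice, every loop $L_j$ contributing an arc to the replacement curve necessarily intersects some other loop, hence satisfies $j<i$, so every diagram point on such an arc is already colored at stage $i$. No reordering is needed. The paper's construction of $L_i^*$ is also cleaner than your iterative adjoin/excise procedure: it simply takes the component of $\partial E_i^*$ containing $x$, where $E_i^*$ is the planar piece of $E_i\setminus\bigl(\bigcup_{j\neq i}L_j\bigr)$ adjacent to $x$, and then pushes slightly inward.

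Your proposed fix---cascading reorderings to drag each offending $L_k$ (and its prerequisites) before stage $i$---can fail. Concretely, suppose $L_{i+1}$ meets $s_i$ (so it depends on $L_i$), and $L_{i+2}$ meets $s_{i+1}$ and also intersects $L_i$ as a curve. An arc of $L_{i+2}$ absorbed into $L_i^*$ may cross $s_{i+1}$, so $L_i^*$ would require $s_{i+1}$ colored; but $L_{i+1}$ (which colors $s_{i+1}$) requires $s_i$ colored, and $s_i$ is colored only by $L_i^*$. This is a genuine cycle in the dependency graph of the modified sequence, and no linear ordering resolves it. Finiteness of $\mathcal L$ does not rule out such cycles, so the cascade argument as stated does not close. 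The one-line repair is to adopt the paper's choice of $i$; once you do, properties (a)--(c) follow immediately and the induction goes through.
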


\begin{proof}
    Let $\mathcal{L}$ be an ordered collection of loops realizing the plain sphere number of $D$. That is, $\mathcal{L}$ gives a coloring sequence for $D$ starting from $n$ seeds. After a small perturbation if needed, we may assume that the pairwise intersections of circles in $\mathcal{L}$ are disjoint from $D$. We define the {\it complexity} of $\mathcal{L}$ to be the (unsigned) count of intersection points between pairs of circles in $\mathcal{L}$: 

    \[
   \mathfrak{c}(\mathcal{L}):=  \sum_{L_i,L_j\in \mathcal{L}}\Big|L_i\bigcap_{i\neq j} L_j\Big|.
    \]
    Without loss of generality, we may assume that $\mathcal{L}$ has minimal complexity over all coloring sequences for $D$ starting with $n$ seeds. (Recall that $\rho(D)\leq \omega(D),$ and the Wirtinger number allows us to define a plain sphere coloring sequence where the loops are pairwise disjoint. Thus, it is clear that the complexity of a coloring sequence can be chosen to be zero after potentially increasing the number of seeds. What we show here is that $\mathcal{L}$ can be chosen to realize $\rho(D)$ while also satisfying $\mathfrak{c}(\mathcal{L})=0.$)

    If $\mathfrak{c}(\mathcal{L})\neq 0,$ there exist $L_i, L_j \in \mathcal{L}$ such that $i\neq j$ and $L_i\cap L_j \neq \emptyset$. Select $L_i$ with the property that it is the last loop in the ordered set $\mathcal{L}$ which has non-empty intersection with other loops in $\mathcal{L}$. That is, $\forall k>i,$ $L_k$ is disjoint from all other loops in $\mathcal{L}.$

We will prove the statement by contradiction; we will show that as long as $\mathfrak{c}(\mathcal{L})\neq 0$, the coloring sequence $\mathcal{L}$ does not in fact minimize complexity over all plain sphere coloring sequences starting with $n$ seeds. We will do this by producing a new plain sphere coloring sequence in which we replace $L_i$ with another circle $L_i^\ast$, such that $L_i^\ast$ defines a valid plain sphere coloring move at stage $i$ and colors the same strand as $L_i$. (This implies that all subsequent moves $L_{i+1}, \dots, L_{|s(D)|-n}$ in $\mathcal{L}$ can be performed without modification.) Lastly, we will show that  $\mathfrak{c}\bigl((L_1, \dots, L_{i-1}, L_i^*, L_{i+1},\dots,L_{|s(D)|-n})\bigr)< \mathfrak{c}(\mathcal{L})$.

In order to simplify the exposition, assume from now on that the coloring moves determined by $L_1, \dots, L_{i-1}$ have already been performed, and we are at stage $i$ of the coloring process, at which point the loop $L_i$ determines a valid plain sphere coloring move. 

    Denote by $E_i$ the disk bounded by $L_i$ in the plane of projection, and denote by $\mathcal{A}$ the intersection between the interior of $E_i$ and loops in $\mathcal{L}$:
    \[
    \mathcal{A}:= \mathring E_i \bigcap \Big(\bigcup_{{L_j\in\mathcal{L}}}L_j\Big).
    \]
    Note that $\mathcal{A}$ is the union of properly embedded $1$-manifolds (i.e. arcs and loops) in $E_i$. 
    The boundaries of the arcs in $\mathcal{A}$ cut $L_i=\partial E_i$ into a collection of arcs $\mathcal{A'}$, and $\mathcal{A}$ cuts $E_i$ into a collection of planar surfaces $\mathcal{P}$. For such a surface, a component of its boundary is either the union of subarcs of $\mathcal A$, or a single loop which is disjoint from all other loops in $\mathcal L$. 
    The intersection $L_i \cap D$ consists of two or more isolated points, and because loop $L_i$ defines a valid coloring move at stage $i$ of the coloring process, exactly one of these intersection points is uncolored. 
    Denote this point by $x$ and the arc in $\mathcal{A'}$ containing $x$ by $a$. The arc $a$ is contained in the boundary of some surface, $E_i^*$, in $\mathcal P$. Denote by $L_i^*$ the component of $\partial E_i^*$ which contains $a$. See Figure~\ref{fig:intersecting_loops}.
    Consider loop $L_j$, $j\neq i$, which has non-empty intersection with another loop in $\mathcal L$. By our choice of $L_i$, the loop $L_j$ precedes $L_i$ in the coloring sequence; so at stage $i$, all intersection points $L_j \cap D$ are colored. (On the other hand, if $L_j$ is disjoint from all other loops in $\mathcal L$, then points $L_j \cap D$ are not necessarily colored.)
    The arcs that make up $L_i^*$ are subarcs of $\mathcal A \cup a$ which lie on loops in $\mathcal L$ that non-trivially intersect other loops. Hence, all intersection points $L_i^* \cap D$ except for $x$ are colored at stage $i$, and $L_i^*$ defines a valid coloring move at this stage.

     Thus,
     \[
     \mathcal{L}^\ast:= \left(L_1, \dots, L_{i-1}, L_i^\ast, L_{i+1}, \dots, L_{|s(D)|-n}\right)
     \]
     is a valid plain sphere coloring sequence starting from $n$ seeds. In order to satisfy transversality conditions, from this point on we will use $L_i^\ast $ to denote the desired boundary component of a slightly shrunken copy of the planar surface $E_i^*$.
     
     Lastly, we check that $\mathfrak{c}(\mathcal{L}^\ast)<\mathfrak{c}(\mathcal{L}).$ The loop $L_i^\ast$ is disjoint from $\mathcal{L}$ by construction, while the loop $L_i$ has the property that $L_i\cap L_j \neq \emptyset$ for some $j\neq i.$ Therefore, as claimed, replacing $L_i$ by $L_i^\ast$ gives a plain sphere coloring sequence $\mathcal L^\ast$ which has strictly lower complexity than $\mathcal L$.
\end{proof}

\begin{figure}
    \centering

    \begin{tikzpicture}
        [x=1mm,y=1mm, ultra thick, 
        help lines/.style={very thin, draw=white}]
        \definecolor{lightpurple}{HTML}{DF73FF}
        \definecolor{darkpurple}{HTML}{9932CC}
        \definecolor{darkblue}{HTML}{003366}
        \definecolor{pink1}{HTML}{FF1493}

        \draw[white] (-40,0) -- (40,0);
        \draw[name path=c0, very thick] (0,0) circle (25);
        \draw[name path=int1, xshift=2.5mm,yshift=8.93mm] (0:0) -- (285.9:12);
        \draw[name path=curve, help lines] (10,5) .. controls (-7,-1) and (-23,-1) .. (-40,5) node[pos=0.5, below] {$s$};
        \draw[name path=int2, xshift=-10mm,yshift=6.7mm] (0:0) -- (275.3:12);
        \draw[lightpurple, xshift=-10mm,yshift=6.7mm, draw opacity=0.6, line width =2.5pt] (95.3:0.25) -- (275.3:12.25);
        \draw[name path=int3, xshift=-32.5mm, yshift=8.93mm] (0:0) -- (254.1:12);

        \path[name intersections={of=curve and int1,by=a'}];
        \path[name intersections={of=curve and int2,by=x}];
        \path[name intersections={of=curve and int3,by=b'}];
        \path[name intersections={of=curve and c0,by=p}];

        \draw[name path=ca, thick, dotted] (a') circle (8pt) node[below right=2mm] {$c$};
        \draw[name path=cb, white] (b') circle (8pt);

        \filldraw[white] (x) circle (6pt);
        \draw (10,5) .. controls (-7,-1) and (-23,-1) .. (-40,5);
        \filldraw[white] (a') circle (6pt);
        \filldraw[white] (b') circle (6pt);
        \draw[xshift=2.5mm,yshift=8.93mm] (0:0) -- (285.9:12);
        \draw[lightpurple, line width=2.5pt, xshift=2.5mm,yshift=8.93mm, draw opacity=0.6] (285.9:-0.25) -- (285.9:12.25);
        \draw[xshift=-32.5mm,yshift=8.93mm] (0:0) -- (254.1:12);

        \path[name intersections={of=ca and curve,by={ax,a}}];
        \path[name intersections={of=cb and curve,by={bx,b}}];
        \filldraw (p) circle (2pt) node[below left] {$x$};
        \filldraw (a) circle (2pt) node[above left] {$a$};
        \filldraw (b) circle (2pt) node[above right] {$b$};

        \draw[lightpurple, line width=2.5, draw opacity=0.6] ($(ax) + (-0.45,-0.15)$) -- (10.15,5.05);
        \draw[thick, dotted] (a') circle (8pt);

        \draw (97:21) -- (97:29)
        (70:21) -- (70:29)
        (300:21) -- (300:29);
        \draw[lightpurple, line width= 2.5pt, draw opacity=0.6] (97:20.75) -- (97:29.25)
        (70:20.75) -- (70:29.25)
        (300:20.75) -- (300:29.25);

        \node at (40:29) {$L_i$};
        \node at (-30:15) {$E_i$};
        
    \end{tikzpicture}
    
    \caption{Exactly one uncolored strand $s$ intersects the loop $L_i \in \mathcal L$ at stage $i$.}
    \label{fig:strand_labels}

\end{figure}
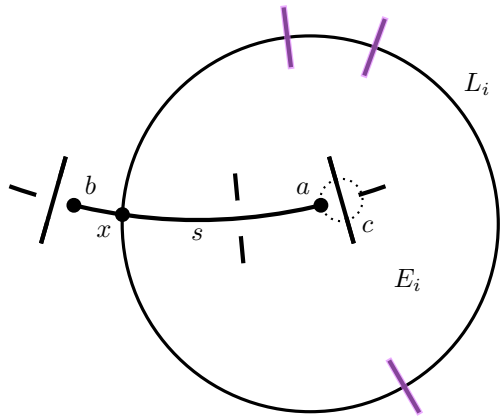

\begin{lemma} \label{lem:innermost-loops} Let $D$ be a diagram of a link $L$ with $\rho(D)=n$. There exists a diagram $D'$ of $L$ such that the conclusion of Lemma~\ref{lem:disjoint-loops} holds, and such that there exists a set of $n$ seeds for $D'$ and a valid coloring sequence $\mathcal{L'}$ for $D'$, starting from those seeds, with the property that all innermost loops of $\mathcal{L'}$ are Wirtinger loops.
\end{lemma}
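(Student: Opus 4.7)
The plan is to start from the disjoint coloring sequence $\mathcal{L} = (L_1, \ldots, L_r)$ provided by Lemma~\ref{lem:disjoint-loops} and iteratively modify it until every innermost loop becomes a Wirtinger loop, allowing Reidemeister modifications of the diagram when necessary. Define the \emph{complexity} of a coloring sequence to be $\sum_{L_i \text{ innermost}} |v(D) \cap \mathring{E}_i|$, where $E_i$ is the disk bounded by $L_i$ in the plane of projection, and induct downward on this quantity.

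For the base case, we check that if every innermost $E_i$ contains at most one crossing, then every innermost loop is (or may be taken to be) a Wirtinger loop. If $\mathring{E}_i$ contains no crossing, then every arc of $D\cap E_i$ is a through-arc with both endpoints on $L_i$ lying on a single strand, so the number of uncolored intersection points of $L_i$ with $D$ is even---contradicting the defining requirement that exactly one such point be uncolored. If $\mathring{E}_i$ contains exactly one crossing, any spectator through-arc of $D\cap E_i$ not incident to that crossing can be excluded by shrinking $L_i$ to the subdisk of $E_i$ containing the crossing, using the same isotopy trick that produced $L_i^*$ in the proof of Lemma~\ref{lem:disjoint-loops}; after shrinking, $L_i$ is isotopic to the boundary of a regular neighborhood of the crossing.

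For the inductive step, let $L_i$ be an innermost loop with $|v(D)\cap\mathring{E}_i|\ge 2$. The unique uncolored point $x\in L_i\cap D$ lies on a strand $s_j$ with $|L_i\cap s_j|=1$, so following $s_j$ from $x$ inward forces it to terminate at some crossing $c\in\mathring{E}_i$ as an understrand; let $o$ and $u$ denote the overstrand and the other understrand at $c$. If both $o$ and $u$ are colored at stage $i$, we replace $L_i$ by a small Wirtinger loop $L_i^W$ around $c$: it lies in $\mathring{E}_i$, is therefore disjoint from the other loops of $\mathcal{L}$, colors $s_j$ exactly as $L_i$ did, and strictly decreases the complexity. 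If either $o$ or $u$ fails to be colored at stage $i$, we prepend additional Wirtinger loops to $\mathcal{L}$ at crossings of $E_i$ reachable from the colored strands meeting $L_i$, propagating coloring inward; each newly colored strand that was originally colored by some later $L_{i'}\in\mathcal{L}$ is compensated by deleting $L_{i'}$, so that the seed count is preserved. When this Wirtinger propagation stalls, we modify $D$ inside $E_i$ by a Reidemeister~II move between a colored strand and a nearby uncolored strand, creating two auxiliary crossings that each admit a Wirtinger coloring move and unblock the propagation; the resulting modified diagram $D'$ represents the same link and has the same seed count.

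The main obstacle is showing that this propagation, possibly assisted by Reidemeister moves, always eventually reaches $c$ while preserving the seed count at $n$. The key point is that the plain-sphere relation imposed by $L_i$ is a consequence of the Wirtinger relations at the crossings in $\mathring{E}_i$, so in a sufficiently enlarged diagram, $L_i$'s coloring effect can always be simulated by a sequence of Wirtinger moves. Iterating the complexity reduction across all innermost loops then yields the desired diagram $D'$ and coloring sequence $\mathcal{L}'$ in which every innermost loop is Wirtinger.
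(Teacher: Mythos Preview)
Your argument has a genuine gap, and it stems from missing the key structural observation that makes the paper's proof work. Because $L_i$ is innermost and the loops in $\mathcal{L}$ are disjoint, \emph{every} component of $D\cap E_i$ other than $s\cap E_i$ is already colored at stage $i$. Indeed, any strand meeting $\partial E_i$ at a point other than $x$ is colored by hypothesis, and any strand entirely contained in $\mathring{E}_i$ can never be colored by a loop of $\mathcal{L}$ (no loop lies inside $E_i$ since $L_i$ is innermost, and no loop outside $E_i$ can meet such a strand), so it must be a seed, hence colored from the start. Once you know this, the crossing $c$ at which $s$ terminates inside $E_i$ has its overstrand and its other understrand colored \emph{unless the overstrand is $s$ itself}. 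So there are only two cases: either you can immediately replace $L_i$ by a Wirtinger loop at $c$, or $s$ is simultaneously the overstrand and an understrand at $c$, in which case a subarc of $s$ in $E_i$ closes up to a loop bounding a subdisk $G\subset E_i$, and an isotopy across $G$ resolves $c$ and removes $L_i$ from the sequence altogether.

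Your inductive step does not exploit this; instead you posit that $o$ or $u$ might be uncolored for some unrelated reason and then propose to ``propagate coloring inward'' via prepended Wirtinger loops and Reidemeister~II moves. You explicitly flag the termination of this propagation as ``the main obstacle'' and then appeal to the group-theoretic fact that the plain-sphere relation is a consequence of the Wirtinger relations in $\mathring{E}_i$. But that appeal is circular: converting a relation in $\pi_1$ into a sequence of diagrammatic coloring moves is precisely the content of the lemma (and, at large, of the Meridional Rank Conjecture). Moreover, your Reidemeister~II trick does not do what you claim: sliding a colored strand over an uncolored one creates two new crossings at which the overstrand is colored, but the Wirtinger move there still requires one of the adjacent understrand segments to be colored, which is not guaranteed. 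The fix is not to repair the propagation argument but to discard it: once you observe that everything in $E_i$ except $s$ is colored, the two-case analysis above finishes the proof directly.
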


\begin{proof}
    Since $\rho(D)=n,$ there exists a coloring sequence $\mathcal{L}$ for $D$ starting from $n$ seeds. Using Lemma~\ref{lem:disjoint-loops}, we can assume that the circles contained in $\mathcal{L}$ are disjoint, possibly after modification. 
    
    Let $L_i$ be an innermost loop in $\mathcal{L}$. This means that $L_i$ bounds a disk $E_i$ whose interior is disjoint from $\mathcal{L}$. As before, we consider $L_i$ at stage $i$ of the coloring process. That is, among the points in the intersection $L_i\cap D,$ only one point, denoted $x$, is not colored. Let $s$ be the strand of $D$ containing $x$ and  let $\partial(s)=\{a, b\}.$ By the definition of a plain sphere coloring move, $L_i$ intersects $s$ transversely in exactly one point. Therefore, $a$ and $b$ are contained in two different components of $\mathbb{R}^2\backslash L_i$. Without loss of generality, assume $a\in E_i, b\notin E_i.$ Denote by $c$ the crossing in $D$ incident to $a$. See Figure~\ref{fig:strand_labels}. The key fact here is that $s\cap E_i$ is the only arc in $D\cap E_i$ which is not colored at stage $i$. This is a consequence of the following two observations: $x$ is the only non-colored point in $\partial E_i\cap D$, so all other strands intersecting $\partial E_i$ are colored; and $L_i$ is innermost, so no strand entirely contained in $\mathring E_i$ can be colored via another circle in $\mathcal{L}$. That is, no strand in $\mathring E_i$ can change color after stage $i$. So, indeed, at stage $i$, of all components of $D\cap E_i$ only $s\cap E_i$ is not colored.

    {\it Case 1.} Assume that $s$ is not the overstrand at $c$. From the above discussion, we know that the overstrand and second understand at $c$ are both colored before stage $i$. Therefore, we can color $s$ via a Wirtinger move at the crossing $c$. In particular, we can replace the loop $L_i$ in $\mathcal{L}$ by a circle bounding a small neighborhood of $c$ in the plane. This defines a valid coloring sequence $\mathcal{L}'$ in which the innermost loop $L_i$ was replaced by one which satisfies the conclusions of the lemma.

    {\it Case 2.} Assume that $s$ is the overstrand at $c$. We also know that $c$ is the crossing where $s$ terminates. Hence, $s$ is both the overstrand and an understrand at $c$. In $P(D)$, the planar projection determined by $D$, $s$ is the union of edges of $P(D)$ and, since $s$ is both the overstrand and understrand at $c$, we can conclude that the union of a subset of the edges in $P(D)$ constitutes an embedded loop $\gamma$ in $E_i$. Moreover, $\gamma$ bounds a disk $G\subset E_i$ such that $\gamma=\partial G$ is contained in the projection of the strand $s$. Hence, there is an isotopy of $D$ along $G$ creating a diagram $D'$ in which crossing $c$ has been resolved in the direction which preserves the number of components of the link. Note that the image of the strand $s$ under this isotopy is contained in a strand of $D'$ that intersects $L_i$ in a second, necessarily colored, point. This eliminates the move determined by $L_i$ from the coloring sequence.

    One of the two cases will apply to each innermost loop in $\mathcal{L}$. Therefore, each innermost loop $L_i$ may be replaced by a Wirtinger loop or removed after an isotopy contained in $E_i$. The result is a coloring sequence starting from $n$ seeds in which all innermost loops represent Wirtinger moves.
\end{proof}

\begin{definition}
    Let $D$ be a partially colored link diagram, and let $\mathcal{L}$ be an ordered set of disjoint circles transverse to $D$ and representing valid coloring moves on $D$. Let loop $C$ be in $\mathcal L$ and let $G$ be the disk bounded in the plane by $C$. A loop $C$ is {\it depth-two} if: there exists at least one loop in $\mathcal{L}$ which is contained in the interior of $G$; and all loops contained in the interior of $G$ are innermost loops of $\mathcal{L}$. For $n>2$, the definition is inductive in the natural way: a loop $C$ is {\it depth-$n$} if disk $G$ contains at least one depth-$(n{-}1)$ loop and no nested sequence of $n$ or more loops. 
\end{definition}

\begin{figure}
    \centering

    \begin{tikzpicture}
        [x=1mm,y=1mm, ultra thick, 
        help lines/.style={very thin, draw=white}]
        \definecolor{lightpurple}{HTML}{DF73FF}
        \definecolor{darkpurple}{HTML}{9932CC}
        \definecolor{darkblue}{HTML}{003366}
        \definecolor{pink1}{HTML}{FF1493}

        \draw[name path=c0, very thick] (0,0) circle (25);
        \draw[name path=c1, very thick] (75,0) circle (25);

        \draw[name path=seed] (-18,7) .. controls (-5,10) and (5,10) .. (18,7) node[midway] (x) {} node[pos=0.7,above] {$s$};
        \draw[name path=apple, xshift=-13mm, yshift=4mm] (0:0) -- (100:8);
        {\draw[name path=banana, xshift=13mm, yshift=4mm] (0:0) -- (80:8);}

        \path[name intersections={of=seed and apple,by=a}];
        \path[name intersections={of=seed and banana,by=b}];

        \filldraw[name path=crossing1, white] (a) circle (5pt);
        \filldraw[name path=crossing2, white] (b) circle (5pt);
        \draw[xshift=-13mm,yshift=4mm] (0:0) -- (100:8);
        \draw[xshift=13mm,yshift=4mm] (0:0) -- (80:8);

        \draw[name path=alpha, thick, dashed] (0,-25) -- (x) node[pos=0.8,right] {$\alpha$};
        \filldraw (0,-25) circle (1.5pt) node[below left] {$\scriptstyle b$};
        \filldraw (x) circle (1.5pt) node[above left] {$\scriptstyle a$};
        \draw[name path=line1] (-8,-5) -- (8,-5) node[pos=0,left] {$s_1$};
        \draw[name path=line2] (-8,-10) -- (8,-10) node[left,pos=0] {$s_2$};
        \draw[name path=linem] (-8,-18) -- (8,-18) node[left,pos=0] {$s_m$};
        \node at (40:29) {$L_i$};

        \path[name intersections={of=alpha and line1,by=p1}];
        \path[name intersections={of=alpha and line2,by=p2}];
        \path[name intersections={of=alpha and linem,by=pm}];
        \filldraw (p1) circle (1.5pt) node[above left] {$\scriptstyle p_1$};
        \filldraw (p2) circle (1.5pt) node[above left] {$\scriptstyle p_2$};
        \filldraw (pm) circle (1.5pt) node[above left] {$\scriptstyle p_m$};

        \path[name intersections={of=crossing1 and seed,by={endpt1,endpt1'}}];
        \path[name intersections={of=crossing2 and seed,by={endpt2,endpt2'}}];
        \draw[line width=2.5pt, lightpurple, draw opacity=0.6] (endpt1) .. controls +(8,1.39) and ++(-8,1.39) .. (endpt2);

        \begin{scope}[xshift=75mm]
        \draw[name path=seed] (-18,7) .. controls (-5,10) and (5,10) .. (18,7) node[pos=0.7,above] {$s$};
        \draw[name path=apple, xshift=-13mm, yshift=4mm] (0:0) -- (100:8);
        {\draw[name path=banana, xshift=13mm, yshift=4mm] (0:0) -- (80:8);}

        \path[name intersections={of=seed and apple,by=a}];
        \path[name intersections={of=seed and banana,by=b}];

        \filldraw[white] (a) circle (5pt);
        \filldraw[white] (b) circle (5pt);
        {\draw[xshift=-13mm,yshift=4mm] (0:0) -- (100:8);}
        {\draw[xshift=13mm,yshift=4mm] (0:0) -- (80:8);}

        \draw[name path=l1] (-8,-5) -- (8,-5);
        \draw[name path=l2] (-8,-10) -- (8,-10);
        \draw[name path=l3] (-8,-18) -- (8,-18);
        \draw[line width=2.5pt, lightpurple, draw opacity=0.6] ($(endpt1)+(75,0)$) .. controls +(8,1.39) and ++(-8,1.39) .. ($(endpt2)+(75,0)$);
        \draw[line width=5pt, white] (-6.9,8.9) -- (6.9,8.9);
        \draw[name path=seed1] (-7,8.87) .. controls (0,10.1) and (-6.5,-29) .. (0,-29);
        \draw[name path=seed2] (7,8.87) .. controls (0,10.1) and (6.5,-29) .. (0,-29);

        \path[name intersections={of=seed1 and l1,by=1}];
        \path[name intersections={of=seed1 and l2,by=2}];
        \path[name intersections={of=seed1 and l3,by=3}];
        \path[name intersections={of=seed2 and l1,by=4}];
        \path[name intersections={of=seed2 and l2,by=5}];
        \path[name intersections={of=seed2 and l3,by=6}];

        \filldraw[white] (1) circle (5pt);
        \draw[dotted, thick] (1) circle (6.5pt) node[above left=1.5mm] {$\scriptstyle q_{1,1}$};
        \filldraw[white] (2) circle (5pt);
        \draw[dotted, thick] (2) circle (6.5pt) node[above left=1.5mm] {$\scriptstyle q_{2,1}$};
        \filldraw[white] (3) circle (5pt);
        \draw[dotted, thick] (3) circle (6.5pt) node[above left=1.5mm] {$\scriptstyle q_{m,1}$};
        \filldraw[white] (4) circle (5pt);
        \draw[dotted, thick] (4) circle (6.5pt) node[above right=1.5mm] {$\scriptstyle q_{1,2}$};
        \filldraw[white] (5) circle (5pt);
        \draw[dotted, thick] (5) circle (6.5pt) node[above right=1.5mm] {$\scriptstyle q_{2,2}$};
        \filldraw[white] (6) circle (5pt);
        \draw[dotted, thick] (6) circle (6.5pt) node[above right=1.5mm] {$\scriptstyle q_{m,2}$};
        \draw (-7,8.87) .. controls (0,10.1) and (-6.5,-29) .. (0,-29);
        \draw (7,8.87) .. controls (0,10.1) and (6.5,-29) .. (0,-29);

        \draw[line width=2.5pt, lightpurple, draw opacity=0.6] (-7,8.87) .. controls (0,10.1) and (-6.5,-29) .. (0,-29);
        \draw[line width=2.5pt, lightpurple, draw opacity=0.6] (7,8.87) .. controls (0,10.1) and (6.5,-29) .. (0,-29);

        \node at (40:29) {$L_i$};
        \end{scope}

        \draw[->, line width=2pt] (32.5,0) -- (42.5,0);
    \end{tikzpicture}
    
    \caption{An isotopy to ensure no seed strand $s$ is contained entirely in the interior of any disk $E_i$ in the plane with $\partial(E_i)=L_i \in \mathcal L$. After a sequence of $m$ Reidemeister II moves, $2m$-many Wirtinger coloring loops are added in the appropriate positions to sequence $\mathcal L$ in order to obtain $\mathcal L'$.}
    \label{fig:seed_isotopy}
\end{figure}
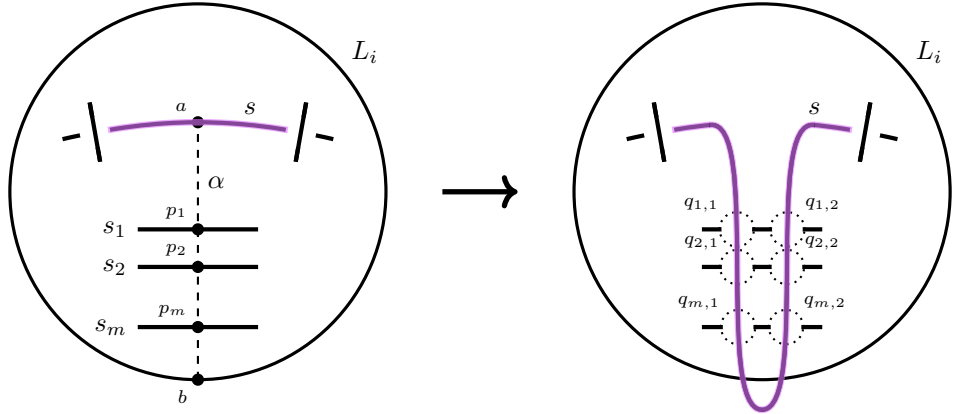

\begin{lemma} \label{lem:no-seeds-inside} Let $D$ be a diagram of a link $L$ with $\rho(D)=n$. There exists a diagram $D'$ of $L$ and a valid coloring sequence $\mathcal{L}'$ for $D'$ starting from $n$ seeds, such that all the conclusions of Lemma~\ref{lem:innermost-loops} are satisfied and, moreover, if $L_i$ is any loop in $\mathcal{L}'$, then no seed for the coloring sequence is entirely contained in the interior of the disk $E_i$ bounded by $L_i$ in the plane of projection.
\end{lemma}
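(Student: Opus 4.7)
The plan is to prove the lemma by induction on the complexity
\[
\mathfrak{d}(\mathcal{L}) := \#\{(s, L_i) : s \text{ is a seed}, L_i \in \mathcal{L}, s \subset \mathring E_i\},
\]
starting from a pair $(D,\mathcal{L})$ satisfying the conclusions of Lemma~\ref{lem:innermost-loops}. If $\mathfrak{d}(\mathcal{L})=0$, set $(D',\mathcal{L}') := (D,\mathcal{L})$. Otherwise I would pick a pair $(s,L_i)$ with $E_i$ innermost among disks of $\mathcal{L}$ whose interior contains the seed $s$. By Lemma~\ref{lem:disjoint-loops} and this choice, the open region $R := \mathring E_i \setminus \bigcup_{L_j \in \mathcal{L},\, E_j \subsetneq E_i} \overline{E_j}$ is path-connected and contains $s$. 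I would then select an arc $\alpha$ from a point $a \in s$ to a point $b'$ just outside $L_i$, transverse to $D$, avoiding all crossings of $D$, and meeting the loops of $\mathcal{L}$ only in a single transverse intersection with $L_i$. Let $p_1,\dots,p_m$ be the transverse intersections of $\alpha$ with strands of $D$, listed in order from $a$ to $b'$, and let $s_j$ denote the strand through $p_j$.

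Next I would modify $D$ by performing $m$ consecutive Reidemeister II moves: at each $p_j$, push the arc $\alpha$ (regarded as an extension of the seed $s$) across $s_j$ so that the extension passes over $s_j$ twice in a small neighborhood of $p_j$, producing two new crossings $q_{j,1}, q_{j,2}$. In the resulting diagram $D'$, the strand $s$ is extended to a single longer strand with endpoint at $b' \notin \overline{E_i}$, and each original $s_j$ is split into three strands $s_j^{(1)}, s_j^{(2)}, s_j^{(3)}$, where $s_j^{(2)}$ is the short middle piece near $p_j$; compare Figure~\ref{fig:seed_isotopy}.

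To build $\mathcal{L}'$, I retain every loop of $\mathcal{L}$ and insert two Wirtinger loops at $q_{j,1}$ and $q_{j,2}$ immediately after the stage at which $s_j$ is colored in $\mathcal{L}$ (or at the very start of the sequence if $s_j$ was itself a seed), in an order that colors the two uncolored pieces of $s_j$ sequentially. These Wirtinger moves are valid because the extended $s$ is the colored overstrand at each new crossing and a previously colored piece of $s_j$ provides the second understrand. The seed set has the same cardinality: the extended $s$ remains a seed, and each original seed $s_j \in \{s_1,\dots,s_m\}$ is replaced by a specific piece $s_j^{(k_j)}$. The conclusions of Lemma~\ref{lem:innermost-loops} carry over: the R-II bumps lie inside $R$ and the new Wirtinger loops can be placed in small pairwise disjoint neighborhoods of the new crossings, keeping the loops of $\mathcal{L}'$ pairwise disjoint; the new Wirtinger loops are innermost and Wirtinger by construction; and any loop of $\mathcal{L}$ that was innermost and Wirtinger either remains so or simply ceases to be innermost.

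The pair $(s,L_i)$ drops out of $\mathfrak{d}$ because the extended $s$ exits $E_i$. The hard part will be confirming $\mathfrak{d}(\mathcal{L}')<\mathfrak{d}(\mathcal{L})$ by ruling out newly created containment pairs. The only delicate case is when some $s_j$ is itself a seed, in which case I must choose the split location $p_j$ and the piece $s_j^{(k_j)}$ so that, for every disk $E_\ell \in \mathcal{L}$ with $s_j \not\subset E_\ell$, one also has $s_j^{(k_j)} \not\subset E_\ell$. I expect this to follow from a short case analysis exploiting the fact that the disks of $\mathcal{L}$ containing any given strand form a chain under inclusion: one positions $p_j$ outside the strictly-inside sub-disks that do not already contain $s_j$ and selects $k_j \in \{1,3\}$ so that the outer piece $s_j^{(k_j)}$ inherits enough of $s_j$'s exits from each problematic disk. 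Iterating the construction strictly decreases $\mathfrak{d}$ at every step and terminates with the desired $(D',\mathcal{L}')$.
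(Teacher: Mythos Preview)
Your proposal follows the same core construction as the paper: choose an arc $\alpha$ from the trapped seed $s$ to a point just outside $L_i$, perform Reidemeister~II moves so that $s$ passes over every strand met along $\alpha$, and insert two Wirtinger loops at each new pair of crossings immediately after the stage when the corresponding $s_j$ becomes colored. The paper then simply says ``repeating this procedure for every instance\dots'' and stops, without tracking any complexity; your introduction of $\mathfrak{d}$ and the choice of $E_i$ innermost among disks containing $s$ (so that $\alpha$ can be routed through $R$ and meets only $L_i$) is additional bookkeeping the paper does not carry out, but it serves exactly to justify the termination the paper leaves implicit.

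Your ``hard part'' does resolve as you expect. Since $p_j\in R$, the point $p_j$ lies outside every $E_\ell$ that is either disjoint from $E_i$ or properly contained in $E_i$, so all three pieces $s_j^{(1)},s_j^{(2)},s_j^{(3)}$ already exit those disks. The remaining disks $E_\ell\supseteq E_i$ form a single chain; if $s_j$ exits $E_{\ell}$ then one of $s_j^{(1)},s_j^{(3)}$ does, and a piece exiting a larger disk in the chain automatically exits every smaller one. Hence taking $k_j\in\{1,3\}$ to be the piece that exits the \emph{largest} $E_\ell$ in the chain with $s_j\not\subset\mathring E_\ell$ guarantees $s_j^{(k_j)}\not\subset\mathring E_\ell$ for every such $\ell$, and $\mathfrak{d}$ drops by exactly one. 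One small point worth stating explicitly: arrange that $\alpha$ does not recross $s$ itself (choose $a$ on the boundary of the component of $R\setminus D$ adjacent to $L_i$), so the finger move does not split the seed you are extending.
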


\begin{proof}
    By Lemma \ref{lem:innermost-loops} we can assume that $\rho(D)=n$ and that $\mathcal{L}$ is a plain sphere coloring sequence starting with seeds $s_1, s_2, ..., s_n$ such that all intermost loops in $\mathcal{L}$ are Wirtinger loops. If $L_i\in \mathcal{L}$ represents a Wirtinger move, then without loss of generality we may assume that $L_i$ is the boundary of a small neighborhood of a crossing in $D$, and the desired conclusion automatically holds for $L_i$. In general, however, $E_i$ may contain multiple crossings of $D$ and potentially entire strands. Assume that a seed strand $s$ is contained in $\mathring E_i$. Define $\alpha$ to be an embedded arc in $E_i$, transverse to $D$, disjoint from small neighborhoods of crossings of $D$, and with the property that $\partial\alpha=:\{a, b\}$ has $a\in \mathring s$ and $b\in (L_i\backslash D)$. See Figure~\ref{fig:seed_isotopy}. We may use $\alpha$ to guide an isotopy of $s$ in the plane of projection to produce a new strand such that the image of $a$ after the isotopy lies outside of $E_i$, in a small neighborhood of $b$. The isotopy can be thought of as a sequence of Reidemeister II moves in which $s$ passes over every strand it encounters along $\alpha$.  

    Let $D_1$ be the diagram that results from the above isotopy. We claim that $\rho(D_1)=n$ and that $D_1$ admits a coloring sequence which is obtained from $\mathcal{L}$ by inserting $2m$ additional loops representing Wirtinger moves, where $m:=|\alpha \cap D|$. More specifically, let $\alpha\cap D=:\{p_1, \dots, p_m\}.$ The additional Wirtinger loops will be interspersed across $\mathcal{L}$ according to when the strands in $D$ containing the $p_j$ are colored. See Figure~\ref{fig:seed_isotopy} for the isotopy of $s$ and the new strands and crossings created.
    
    Note that each point $p_j$ corresponds to a Reidemeister II move performed while isotoping $s$ along $\alpha$. Thus, in a neighborhood of  $p_j$, two new crossings---call them $q_{j, 1}$ and $q_{j, 2}$---are created during the isotopy, and at both crossings $s$ is the overstrand. Let $s_j$ denote the strand of $D$ containing $p_j$. (After the isotopy, $s_j$ is subdivided into three strands by $q_{j, 1}$ and $q_{j, 2}$.) At some stage in the coloring sequence $\mathcal{L}$ for $D$, the strand $s_j$ is colored. This implies that, at the corresponding stage of the coloring sequence for $D_1,$ one of the understrands at either $q_{j, 1}$ or $q_{j, 2}$ is colored. But the overstrand at these crossings, $s$, is a seed, so it is colored as well. Thus, we can perform two consecutive Wirtinger moves at these crossings and color all three strands into which $s_j$ has been subdivided. As a result, any further coloring moves given by $\mathcal L$ whose validity relies on $s_j$ being colored will be valid. Repeating this procedure for each $j\in \{1, \dots, m\}$ produces the desired coloring sequence for~$D_1$.

    In sum, after an isotopy supported in a neighborhood of an arc, we ensured that the seed strand $s$ is no longer contained in the interior of $E_i$. Moreover, we produced a coloring sequence, with the same number of seeds, for the diagram resulting from this isotopy. Repeating this procedure for every instance where a seed is contained entirely within the interior of one of the disks $E_i,$ we arrive at the desired diagram $D'$ and the valid coloring sequence $\mathcal{L}'$ from $n$ seeds.
\end{proof}

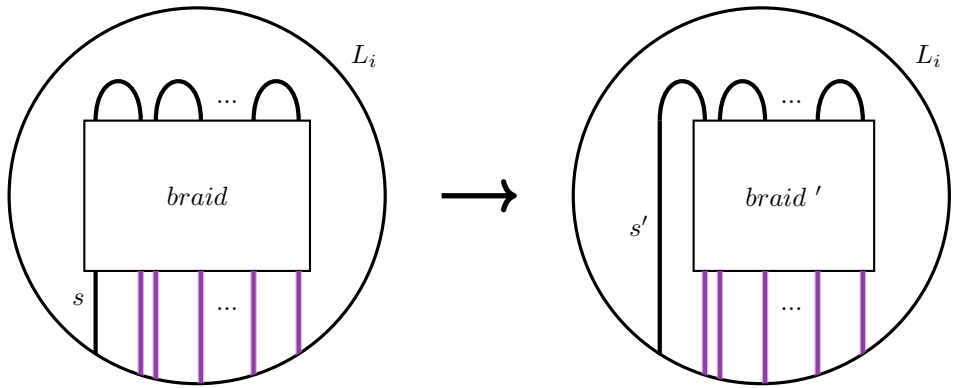
\begin{figure}
    \centering

    \begin{tikzpicture}
        [x=1mm,y=1mm, ultra thick, 
        help lines/.style={very thin, draw=white},
        colored/.style={line width=2.5pt, draw=lightpurple, draw opacity=0.6}]
        \definecolor{lightpurple}{HTML}{DF73FF}
        \definecolor{darkpurple}{HTML}{9932CC}
        \definecolor{darkblue}{HTML}{003366}
        \definecolor{pink1}{HTML}{FF1493}

        \draw[name path=c0, very thick] (0,-5) circle (25);
        \draw[name path=c1, very thick] (75,-5) circle (25);

        \begin{scope}
        \clip (0,-5) circle (25);

        \draw[thick] (-15,-15) rectangle (15,5);
        \draw (-13.5,5) .. controls (-13.5,12) and (-7.5,12) .. (-7.5,5);
        \draw (-5.5,5) .. controls (-5.5,12) and (0.5,12) .. (0.5,5);
        \draw (7.5,5) .. controls (7.5,12) and (13.5,12) .. (13.5,5);

        \draw (-13.5,-15) -- (-13.5,-30) node[pos=0.25, left] {$s$};
        \draw (-7.5,-15) -- (-7.5,-30);
        \draw (-5.5,-15) -- (-5.5,-30);
        \draw (0.5,-15) -- (0.5,-30);
        \draw (7.5,-15) -- (7.5,-30);
        \draw (13.5,-15) -- (13.5,-30);

        \draw[colored] 
        (-7.5,-15) -- (-7.5,-30)
        (-5.5,-15) -- (-5.5,-30)
        (0.5,-15) -- (0.5,-30)
        (7.5,-15) -- (7.5,-30)
        (13.5,-15) -- (13.5,-30);
        
        \node at (4,-20) {...};
        \node at (4,7.5) {...};
        \node at (0,-5) {$braid$};
        \end{scope}
        
        \node at ($(40:29)+(0,-5)$) {$L_i$};

        \begin{scope}[xshift=75mm]
        \clip (0,-5) circle (25);

        \draw[thick] (-9,-15) rectangle (15,5);
        \draw (-13.5,5) .. controls (-13.5,12) and (-7.5,12) .. (-7.5,5);
        \draw (-5.5,5) .. controls (-5.5,12) and (0.5,12) .. (0.5,5);
        \draw (7.5,5) .. controls (7.5,12) and (13.5,12) .. (13.5,5);

        \draw (-13.5,5) -- (-13.5,-30) node[left, pos=0.4] {$s'$};
        \draw (-7.5,-15) -- (-7.5,-30);
        \draw (-5.5,-15) -- (-5.5,-30);
        \draw (0.5,-15) -- (0.5,-30);
        \draw (7.5,-15) -- (7.5,-30);
        \draw (13.5,-15) -- (13.5,-30);

        \draw[colored] (-7.5,-15) -- (-7.5,-30)
        (-5.5,-15) -- (-5.5,-30)
        (0.5,-15) -- (0.5,-30)
        (7.5,-15) -- (7.5,-30)
        (13.5,-15) -- (13.5,-30);
        
        \node at (4,-20) {...};
        \node at (4,7.5) {...};
        \node at (3,-5) {$braid \ '$};
        \end{scope}
        \node at ($(40:29)+(75,-5)$) {$L_i$};

        \draw[->, line width=2pt] (32.5,-5) -- (42.5,-5);
    \end{tikzpicture}
    
    \caption{An isotopy at stage $i$ of the coloring process of a rational $m$-strand tangle supported in disk $E_i$ with $\partial(E_i)=L_i$. After isotopy, all strands that meet $E_i$ can be colored using only Wirtinger moves, starting from $2m{-}1$ colored strands which intersect $L_i$.}
    \label{fig:braid_isotopy}
\end{figure}

Before presenting the next lemma, we review some standard properties of tangles. An \emph{$m$-strand tangle} is a collection of $m$ disjoint arcs properly embedded in a $3$-ball. An $m$-strand tangle is \emph{rational} if all arcs of the tangle can be simultaneously isotoped into the boundary of the $3$-ball. 

\begin{remark}\label{rem:tangle_diagram} Given a tangle $R$ properly embedded in $D^2\times [0,1]$ such that $\partial R\subset (\partial D^2) \times [0,1]$, $R$ has a tangle diagram, analogous to a knot diagram, achieved by projecting $R$ onto $D^2\times \{0\}$. Let $x\in D^2$ denote the point with coordinates (0, 1), where $D^2$ is identified with the unit disk in the plane. If $R$ is a rational tangle embedded in $D^2\times [0,1]$ such that $\partial R\subset \partial D^2 \times [0,1]$, then after planar isotopy, and possibly Reidemeister moves supported in the interior of $D^2\times \{0\}$, $R$ has a tangle diagram as in the left image in Figure \ref{fig:braid_isotopy}. In brief, this is true since the disks of parallelism for the arcs in $R$ can all be isotoped to be disjoint from the disk that is a regular neighborhood of $(D^2\times \{1\})\cup (D^2\times \{0\})\cup (\{x\}\times [0,1])$ in $\partial(D^2\times [0,1])$. Isotoping the arcs of $R$ along these disks of parallelism until they almost lie in $\partial(D^2\times [0,1])$ results in a diagram of $R$ that lies in an annular neighborhood of $\partial D^2\times \{0\}$ in $D^2\times \{0\}$ such that the projection of $R$ is disjoint from a neighborhood of $\{x\}\times \{0\}$ and each arc of the projection has exactly one maximum with respect to the radial height function on $D^2\times \{0\}$ (where we think of height as increasing as we approach the center of the disk). After a planar isotopy, this diagram becomes a diagram as in Figure \ref{fig:braid_isotopy}, left. \end{remark}

\begin{remark}\label{rem:minima_rational} Any tangle $R$ properly embedded in $D^2\times [0,1]$ such that each arc of $R$ has a unique local maximum (or each has a unique local minimum) with respect to projection on the $[0,1]$ component of $D^2\times [0,1]$ is a rational tangle. In brief, this is true since if an arc $\alpha$ in $R$ has a unique local maximum at height $a\in [0,1]$, then there is a disk of parallelism between the subarc of $\alpha$ above $D^2\times \{a-\varepsilon\}$ and an arc in $D^2\times \{a-\varepsilon\}$. Since every arc in $R$ contains a unique local maximum, this disk can be extended downward by moving both endpoints along the arc until it becomes a disk of parallelism between $\alpha$ and a subarc in $\partial(D^2\times [0,1])$. After generating a disk of parallelism in this way for each arc of $R$, we can ensure that these disks are pairwise disjoint by an application of a standard innermost disk and outermost arc argument. Thus, $R$ is rational.\end{remark}

\begin{lemma}\label{lem:eliminate-depth-two} Let $D$ be a diagram of a link $L$ with $\rho(D)=n$, and let $\mathcal{L}=(L_1, L_2, \dots, L_{|s(D)|-n})$ be a plain sphere coloring sequence for $D$, satisfying the conclusions of Lemma~\ref{lem:no-seeds-inside}. Let $L_i$ be a depth-two loop in $\mathcal{L}$ and denote by $E_i$ the disk in the plane of projection bounded by $L_i$. There exists an isotopy of $D,$ supported in a small neighborhood of $E_i$, such that the resulting diagram $D'$ has a coloring sequence $\mathcal{L}'$ with $n$ seeds and such that $L_i$ is replaced by a collection of innermost loops.
\end{lemma}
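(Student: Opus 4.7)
The plan is to show that the tangle contained in a three-ball thickening of $E_i$ is rational, then use the standard form of rational tangles to replace $L_i$ with a collection of Wirtinger loops inside $E_i$.

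Let $B \subset S^3$ be a small three-ball neighborhood of $E_i$ and set $T := L \cap B$, so that $D \cap E_i$ (with crossing information inherited from $D$) is a tangle diagram for $T$. Write $|L_i \cap D| = 2m$, so $T$ consists of $m$ arcs with endpoints on $\partial B$. By the depth-two hypothesis and Lemma \ref{lem:innermost-loops}, every loop of $\mathcal{L}$ contained in $\mathring{E}_i$ is an innermost Wirtinger loop, each encircling a distinct interior crossing; call these crossings $c_1, \ldots, c_k$, and let $s$ be the unique strand of $D$ whose intersection with $L_i$ is uncolored at stage $i$. By Lemma \ref{lem:no-seeds-inside}, no seed of the coloring lies entirely in $\mathring{E}_i$, so every arc of $T$ is colored either through one of the interior Wirtinger moves at a $c_j$ or, in the case of the arc containing $s \cap E_i$, through the loop $L_i$ itself.

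The main step is to show that $T$ is rational. My plan is to argue that at each Wirtinger move at an interior crossing $c_j$, the newly colored understrand can be isotoped in $B$ (via a small bump perpendicular to the plane of $E_i$) so that it acquires a unique local extremum at $c_j$ with respect to the height function given by signed distance from the plane of $E_i$; and the strand $s$, colored by $L_i$, can similarly be pushed out of the plane near its intersection with $L_i$ to carry a unique local extremum inside $B$. Performing these local isotopies in the order given by $\mathcal{L}$, each arc of $T$ ends up with a unique local extremum, and Remark \ref{rem:minima_rational} then implies that $T$ is rational. I expect this step to be the main technical obstacle, since one must verify that the extremum assignments on distinct arcs of $T$ are mutually compatible, that the bump isotopies can be realized simultaneously inside $B$ without producing new crossings, and that the $2m-1$ already-colored intersection points of $L_i$ with $\partial E_i$ remain colored throughout.

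Once rationality of $T$ is established, Remark \ref{rem:tangle_diagram} provides an ambient isotopy of $T$ supported in $B$, hence an isotopy of $D$ supported in a neighborhood of $E_i$, after which $D \cap E_i$ takes the standard rational tangle form shown on the left of Figure \ref{fig:braid_isotopy}. A further planar isotopy routing $s$ around the standardized tangle, as depicted on the right of Figure \ref{fig:braid_isotopy}, yields a diagram $D'$ in which every strand of the tangle inside $E_i$ can be colored by Wirtinger moves alone, starting from the $2m-1$ already-colored intersections of $L_i$ with $\partial E_i$ together with $s$. I then construct $\mathcal{L}'$ by removing $L_i$ and the interior Wirtinger loops around $c_1, \ldots, c_k$ from $\mathcal{L}$ and inserting in their place Wirtinger loops around the crossings of the standardized tangle, taken in a valid Wirtinger coloring order for this rational tangle; outside $E_i$ the sequence $\mathcal{L}'$ agrees with $\mathcal{L}$. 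Each inserted loop bounds a single crossing and hence is innermost in $\mathcal{L}'$, so $\mathcal{L}'$ is a valid plain sphere coloring sequence for $D'$ starting from the original $n$ seeds in which $L_i$ has been replaced entirely by a collection of innermost loops, as required.
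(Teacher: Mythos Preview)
Your overall strategy matches the paper's: show that the tangle $T$ in the thickened $E_i$ is rational, put it in standard plat form, pull one arc out of the braid box, and replace $L_i$ by Wirtinger loops. The gap is in your argument for rationality. Bumping the newly colored understrand at each interior crossing $c_j$ gives that \emph{strand} a local extremum, but an arc of $T$ is in general a concatenation of several strands of $D$ (one for each undercrossing it passes through inside $E_i$), so your procedure produces many extrema on a single arc, not one; Remark~\ref{rem:minima_rational} then does not apply. The compatibility issue you flag is exactly the substance of converting a Wirtinger coloring into a bridge presentation, and local crossing-by-crossing bumps do not resolve it. The paper sidesteps this entirely by doubling $E_i\cap D$ along $\partial E_i$ to obtain a closed link diagram $D^{dbl}$, observing that the $2m$ strands meeting $\partial E_i$ (including $s$) form a set of Wirtinger seeds for $D^{dbl}$, and then invoking Theorem~1.2 of~\cite{blair2019incompatibility} to produce a $z$-only isotopy after which $L^{dbl}$ has exactly $2m$ local maxima, one over each boundary point. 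Restricting this isotopy to $T$ gives each arc a unique local minimum, and now Remark~\ref{rem:minima_rational} applies cleanly.

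Two smaller points. First, once $T$ is in plat form you need the arc pulled out of the braid box to be the one containing $s$; otherwise $s$ remains inside the braid and is not reachable by Wirtinger moves from the $2m-1$ already-colored boundary strands. The paper arranges this by choosing the point $x$ of Remark~\ref{rem:tangle_diagram} just clockwise of $s\cap L_i$, which forces $s$ to become the leftmost bottom strand in Figure~\ref{fig:braid_isotopy}; you reference the figure but do not explain why $s$ can be placed there. Second, in your final sentence $s$ should not be listed among the starting colored strands: after the pull-out isotopy the new strand $s'$ merges with a previously colored boundary strand and becomes colored via the Wirtinger sweep up through the braid box, rather than being assumed colored at the outset.
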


\begin{proof}  
    By assumption, $\mathcal{L}$ is a coloring sequence for $D$ starting from $n$ seeds and satisfying the conclusions of Lemma \ref{lem:no-seeds-inside}. Since $L_i$ is a depth-two loop, there exists at least one loop of $\mathcal{L}$ contained in $E_i$, and moreover any such loop is innermost. Since the coloring sequence satisfies the conclusions of Lemma~\ref{lem:innermost-loops}, all loops contained in $E_i$ are Wirtinger loops. Since the conclusions of Lemma~\ref{lem:no-seeds-inside} hold as well, we also have that no seed for the coloring sequence $\mathcal{L}$ is entirely contained in $E_i$. Let $s$ be the unique strand of $D$ which intersects $L_i$ and which is colored at stage $i$ in the coloring process. 
 
     Recall that $P:\mathbb{R}^3\rightarrow \mathbb{R}^2$ denotes the standard projection map and that $P(L)$ is the regular projection that gives rise to $D$. We can assume that $L\subset \mathbb{R}^2 \times [0,1]$. If $L_i\cap D=2m$, then, since no seed is entirely contained in $E_i$, $E_i\times [0,1]=P^{-1}(E_i)\cap (\mathbb{R}^2 \times [0,1])$ is a $3$-ball containing a properly embedded $m$-strand tangle $T=L\cap (E_i\times [0,1])$. In particular, the lack of seeds in $E_i$ implies $T$ contains no simple closed curves. Double $E_i\cap D$ along $\partial E_i$ to obtain a link diagram $D^{dbl}$ of the link $L^{dbl}$, where $L^{dbl}$ is obtained by doubling $T$ along $\partial E_i\times [0,1]$. Since $E_i$ does not entirely contain any seed strands and all loops contained in $E_i$ are Wirtinger loops, the set of $2m$ strands of $D^{dbl}$ that intersect $\partial E_i$, call them $\sigma_1,\dots, \sigma_{2m}$, are a collection of seed strands for a Wirtinger coloring of $D^{dbl}$. By Theorem 1.2 of~\cite{blair2019incompatibility}, there is an isotopy of $L^{dbl}$ achieved by modifying only the $z$-coordinate of points on $L^{dbl}$ and preserving the projection $P(L^{dbl})$ at all times, after which $L^{dbl}$ has exactly $2m$ local maxima with respect to projection to the $z$-axis, and each of these maxima projects to exactly one of $\sigma_1\cap \partial E_i,\dots, \sigma_{2m}\cap \partial E_i$. This isotopy restricts to a proper isotopy of $T$ in $E_i\times [0,1]$, after which each strand of $T$ contains a unique local minimum with respect to the $z$-axis. Hence, by Remark \ref{rem:minima_rational}, $T$ must be a rational $m$-strand tangle. Since $T$ is rational, then, by Remark \ref{rem:tangle_diagram}, there is a sequence  of Reidemeister moves supported in $E_i$ that produces a new diagram $D^*$ as in Figure~\ref{fig:braid_isotopy}, left. That is, $T\cap E_i$ is obtained from a braid by taking the plat closure to one side. Additionally, by taking the point $x$ in Remark \ref{rem:tangle_diagram} to be a point in $L_i$ which lies just clockwise of $s\cap L_i$, we can ensure that $s$ is the leftmost strand entering the braid box from below, as in Figure~\ref{fig:braid_isotopy}, left. Furthermore, it is well known that there is an isotopy of $T$ fixing $\partial T$ which pulls one arc out of the braid box, shown in Figure~\ref{fig:braid_isotopy}; see, for example, the (unique) Claim within the proof of Theorem 1 in~\cite{Schultens03}. After isotopy, call the resulting link diagram $D'$ (Figure~\ref{fig:braid_isotopy}, right) and let $s'$ be the strand containing the image of $s$. Notice that since all strands entering the braid box from below are colored at stage $i$, then all strands of $D'$ which meet $E_i$ can be colored via Wirtinger loops, even~$s'$. 
    
    Thus, after an isotopy supported in a neighborhood of $E_i,$ we can replace $L_i$ by a collection of Wirtinger loops. In particular, we eliminated a depth-two loop from the coloring sequence.
\end{proof}

\begin{corollary}\label{cor:l(D)=w(D)}
    If $L$ is a link and $D$ is a diagram of $L$ with $\rho(D)=\rho(L)=n$, then there exists a diagram $D',$ equivalent to $D$, such that $\rho(D')=\omega(D')=n$.
\end{corollary}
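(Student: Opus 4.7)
The plan is to combine the four preceding lemmas into a single inductive procedure that iteratively eliminates non-Wirtinger loops from the coloring sequence while keeping the number of seeds fixed at $n$. First, I apply Lemma~\ref{lem:no-seeds-inside} to replace $D$ by an equivalent diagram $D^{(0)}$ carrying a coloring sequence $\mathcal{L}^{(0)}$ from $n$ seeds whose loops are pairwise disjoint, whose innermost loops are all Wirtinger, and such that no seed strand lies entirely inside any disk bounded by a loop of $\mathcal{L}^{(0)}$. If every loop of $\mathcal{L}^{(0)}$ is already innermost, we are done; otherwise I will induct on the number of non-innermost loops in the sequence.

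For the inductive step, I pick a non-innermost loop of minimal depth. Such a loop contains only innermost loops in its interior (otherwise an interior non-innermost loop would have strictly smaller depth, contradicting minimality), so the chosen loop is in fact a depth-two loop. Applying Lemma~\ref{lem:eliminate-depth-two} replaces it, via an isotopy supported in a neighborhood of its disk, by a collection of innermost (hence Wirtinger) loops in a new coloring sequence for an equivalent diagram, keeping the same $n$ seeds. The number of non-innermost loops in the sequence therefore strictly decreases, so after finitely many iterations I arrive at a diagram $D'$ equivalent to $D$ with a coloring sequence of $n$ seeds consisting entirely of Wirtinger loops. This gives $\omega(D') \leq n$, and since $\omega(D') \geq \rho(D') \geq \rho(L) = n$, I conclude $\omega(D') = \rho(D') = n$.

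The main obstacle is verifying that the three conditions of Lemma~\ref{lem:no-seeds-inside} persist under each application of Lemma~\ref{lem:eliminate-depth-two}, so that the lemma may be invoked repeatedly without re-running the earlier steps in between. For disjointness, the new Wirtinger loops lie in small neighborhoods of crossings inside $E_i$, which is disjoint from all other loops, so they meet no surviving loop. For the innermost-Wirtinger property, any innermost loop of the updated sequence is either one of the new Wirtinger loops or an old innermost loop outside $E_i$, which was already Wirtinger; a non-innermost loop whose disk contained $L_i$ still contains the new Wirtinger loops and therefore remains non-innermost. For the seed condition, the isotopy of Lemma~\ref{lem:eliminate-depth-two} is supported inside $E_i$, which by hypothesis contained no seeds, so no seed is moved; boundaries of all surviving old loops are unchanged, and the new Wirtinger disks sit inside the seed-free $E_i$. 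With these three properties preserved at every stage, the induction on the number of non-innermost loops terminates as claimed.
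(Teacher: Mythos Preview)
Your proof is correct and follows essentially the same route as the paper's: iterate Lemma~\ref{lem:eliminate-depth-two} on depth-two loops until none remain, then conclude that all loops are innermost and hence Wirtinger. Your write-up is in fact more careful than the paper's in two respects: you make the termination argument explicit (induction on the number of non-innermost loops, which drops by one at each step), and you verify that the hypotheses of Lemma~\ref{lem:eliminate-depth-two}---disjointness, innermost-loops-are-Wirtinger, and no-seeds-inside---persist after each application, whereas the paper simply asserts that ``the procedure can be iterated as many times as needed.''
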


\begin{proof}
Let $D$ be a link diagram and $\mathcal{L}$ a plain sphere coloring sequence for $D$ starting with $n$ seeds.  By repeated application of Lemma~\ref{lem:eliminate-depth-two}, we can eventually eliminate all depth-two loops. Note that eliminating a single loop as described in the lemma may not immediately reduce the number of depth-two loops as it could simultaneously produce a new depth-two loop from a previous depth-three loop; however, the procedure can be iterated as many times as needed. 

After the necessary amount of persistence, we arrive at a diagram $D',$ equivalent to $D$, which admits a plain sphere coloring sequence $\mathcal{L}'$, also with $n$ seeds, such that $\mathcal{L}'$ contains no depth-two loops. Since $D$ was chosen to be minimal with respect to plain sphere number, then $\rho(D')=n$. But this in fact implies that all loops in $\mathcal{L}'$ are innermost. By Lemma~\ref{lem:innermost-loops}, all moves in the coloring sequence $\mathcal{L}'$ are Wirtinger moves. Therefore, $n\geq \omega(D')$. But we also have, by definition, that $\omega(D')\geq \rho(D')=n$. 
\end{proof}

\begin{proof}[Proof of Theorem~\ref{thm:main}] Let $L$ be a link in $S^3$ with $\rho(L)=n$. We wish to show that $\omega(L)=n$ as well. Let $D$ be a diagram realizing $\rho(L)$. This implies that $D$ admits a plain sphere coloring sequence starting from $n$ seeds. By Corollary~\ref{cor:l(D)=w(D)}, there exists a diagram $D'$ of $L$ which can be colored starting with $n$ seeds and using only Wirtinger moves. Therefore, $n\geq \omega(D')\geq \omega(L)\geq \rho(L)=n.$ It then follows from~\cite{blair2020wirtinger} that $\beta(L)=\omega(L)=n.$
\end{proof}

\section{Example and computations} \label{sec:Examples}

\subsection{Example} We compute the plain sphere number of a minimal diagram of the knot $K=14n_{1527}$, shown in Figure~\ref{fig:14n1527_colored}. The bridge number of $K$ is 3 and equals the plain sphere number of the pictured diagram. The Wirtinger number of the diagram is 4. We thank Nathan Dunfield for detecting this example and sharing it with us.

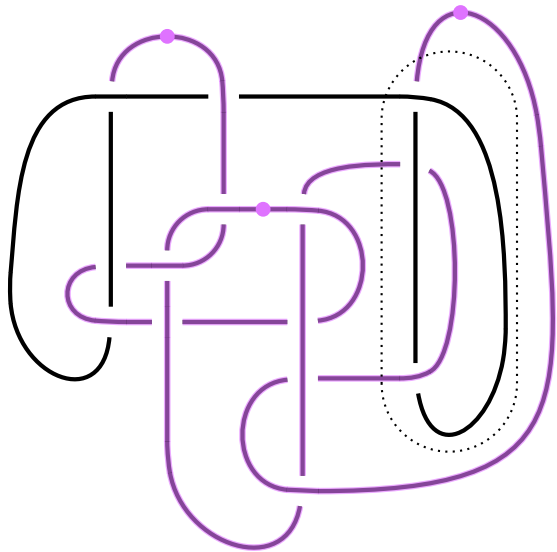
\begin{figure}[h]
    \centering

        \begin{tikzpicture}
        [x=1.5mm,y=1.5mm, ultra thick, 
        help lines/.style={very thin, draw=white},
        crossing/.style={circle, draw, white, minimum size=10pt, node font=\tiny},
        colored/.style={line width=2.5pt, draw=lightpurple, draw opacity=0.6}]
        \definecolor{lightpurple}{HTML}{DF73FF}
        \definecolor{darkpurple}{HTML}{9932CC}
        \definecolor{darkblue}{HTML}{003366}
        \definecolor{pink1}{HTML}{FF1493}

        \draw[name path=s1, help lines] (-10,0) -- (-10,7);
        \draw[name path=s2, help lines] (-1.5,-5) -- (-1.5,-15);
        \draw[name path=s3, help lines] (16,3) -- (16,8);

        \node[crossing] (1) at (-15,0) {};
        \node[crossing] (2) at (-5,0) {} 
            edge[name path=seed1, bend right=85, distance=8mm] (1)
            edge[colored, bend right=85, distance=8mm] (1)
            edge (1);
        \node[crossing] (3) at (12,0) {}
            edge (2);
        \node[crossing] (b) at (23,-3) {}
            edge[name path=seed3, out=100, in=85, distance=15mm] (3)
            edge[colored, name path=seed3, out=100, in=85, distance=15mm] (3);
        \node[crossing] (4) at (12,-6) {};
        \node[crossing] (5) at (-5,-10) {} 
            edge (2)
            edge[colored] (2);
        \node[crossing] (6) at (2,-10) {} 
            edge[name path=seed2] (5)
            edge[colored] (5)
            edge[out=85, in=180, distance=4mm] (4)
            edge[colored, out=85, in=180, distance=4mm] (4);
        \node[crossing] (7) at (-15,-15) {};
        \node[crossing] (8) at (-10,-15) {} 
            edge[bend right=45] (5)
            edge[colored, bend right=45] (5)
            edge (7)
            edge[colored] (7)
            edge[bend left=45] (5)
            edge[colored, bend left=45] (5);
        \node[crossing] (9) at (-15,-20) {} 
            edge[bend left=85, distance=5mm] (7)
            edge[colored, bend left=85, distance=5mm] (7)
            edge (1);
        \node[crossing] (d) at (-24,-17) {}
            edge[out=85, in=180] (1)
            edge[out=275, in=265, distance=10mm] (9);
        \node[crossing] (10) at (-10,-20) {} 
            edge (9)
            edge[colored] (9)
            edge (8)
            edge[colored] (8);
        \node[crossing] (11) at (2,-20) {} 
            edge (10)
            edge[colored] (10)
            edge[bend right=85, distance=8mm] (6)
            edge[colored, bend right=85, distance=8mm] (6);
        \node[crossing] (a) at (20,-22) {}
            edge[out=90, in=350, distance=10mm] (3);
        \node[crossing] (12) at (2,-25) {};
        \node[crossing] (13) at (12,-25) {} 
            edge[bend right=65, distance=5mm] (4)
            edge[colored, bend right=65, distance=5mm] (4)
            edge (12)
            edge[colored] (12)
            edge (3)
            edge[out=280, in=260, distance=10mm] (a);
        \node[crossing] (14) at (2,-35) {} 
            edge (6)
            edge[colored] (6)
            edge[bend left=85, distance=8mm] (12)
            edge[colored, bend left=85, distance=8mm] (12)
            edge[out=0, in=275, distance=35mm] (b)
            edge[colored, out=0, in=275, distance=35mm] (b);
        \node[crossing] (c) at (-10,-32) {}
            edge (10)
            edge[colored] (10)
            edge[out=280, in=260, distance=10mm] (14)
            edge[colored, out=280, in=260, distance=10mm] (14);

        \draw[shift=(1)] (0:6pt) -- (180:6pt);
        \draw[shift=(2)] (95:6pt) .. controls +(275:3pt) and ++(90:4pt) .. (270:6pt);
            \draw[colored, shift=(2)] (95:6pt) .. controls +(275:3pt) and ++(90:4pt) .. (270:6pt);
        \draw[shift=(3)] (180:6pt) .. controls +(0:4pt) and ++(170:4pt) .. (350:6pt);
        \draw[shift=(b)] (100:6pt) .. controls +(280:3pt) and ++(95:4pt) .. (275:6pt);
            \draw[colored, shift=(b)] (100:6pt) .. controls +(280:3pt) and ++(95:4pt) .. (275:6pt);
        \draw[shift=(5)] (180:6pt) -- (0:6pt);
            \draw[colored, shift=(5)] (180:6pt) -- (0:6pt);
        \draw[shift=(6)] (180:6pt) .. controls +(0:4pt) and ++(175:3pt) .. (355:6pt);
            \draw[colored, shift=(6)] (180:6pt) .. controls +(0:4pt) and ++(175:3pt) .. (355:6pt);
        \draw[shift=(8)] (180:6pt) -- (0:6pt);
            \draw[colored, shift=(8)] (180:6pt) -- (0:6pt);
        \draw[shift=(d)] (275:6pt) .. controls +(95:4pt) and ++(265:3pt) .. (85:6pt);
        \draw[shift=(9)] (175:6pt) .. controls +(355:3pt) and ++(180:4pt) .. (0:6pt);
            \draw[colored, shift=(9)] (175:6pt) .. controls +(355:3pt) and ++(180:4pt) .. (0:6pt);
        \draw[shift=(10)] (90:6pt) -- (270:6pt);
            \draw[colored, shift=(10)] (90:6pt) -- (270:6pt);
        \draw[shift=(a)] (90:6pt) .. controls +(270:4pt) and ++(80:3pt) .. (260:6pt);
        \draw[shift=(13)] (180:6pt) .. controls +(0:4pt) and ++(210:4pt) .. (25:6pt);
            \draw[colored, shift=(13)] (180:6pt) .. controls +(0:4pt) and ++(210:4pt) .. (25:6pt);
        \draw[shift=(c)] (90:6pt) .. controls +(270:4pt) and ++(100:3pt) .. (280:6pt);
            \draw[colored, shift=(c)] (90:6pt) .. controls +(270:4pt) and ++(100:3pt) .. (280:6pt);
        \draw[shift=(14)] (175:6pt) -- (0:6pt);
            \draw[colored, shift=(14)] (175:6pt) -- (0:6pt);

        \path[name intersections={of=seed1 and s1,by=dot1}];
        \path[name intersections={of=seed2 and s2,by=dot2}];
        \path[name intersections={of=seed3 and s3,by=dot3}];

        \filldraw[lightpurple] (dot1) circle (2pt)
            (dot2) circle (2pt)
            (dot3) circle (2pt);

        \draw[thick, dotted, rounded corners=9mm] (9,-31.5) rectangle (21,4);
        
    \end{tikzpicture}

    \caption{A partially colored minimal diagram $D$ of the knot $K=14n_{1527}$ with $\beta(K) =\rho(D)=3$ and $\omega(D)=4.$ Seed strands are marked with dots. Colored strands are purple. At this stage of the coloring sequence, only Wirtinger moves have been performed, and no further Wirtinger moves are possible. The dotted circle $L_8$ represents a loop coloring move. (The reader may find several alternative loop coloring moves.) After coloring the black strand intersecting $L_8$, the rest of the diagram can be colored by Wirtinger moves.}
    \label{fig:14n1527_colored}
\end{figure}

\subsection{Computing the plain sphere number of diagrams} \label{sec:compute}
Let $D$ be a link diagram in $S^2$, $P(D)$ be the projection corresponding to $D$. Recall that a region of $D$ is the closure of a connected component of $S^2\setminus P(D)$. Note that the boundary of a region is a cycle in $P(D)$ where $P(D)$ is viewed as a $4$-valent graph. Let $\Gamma_D$ denote the dual graph to $D$.

\begin{definition}
    
Let $L$ be an embedded loop in $S^2$ that is transverse to $D$ and is disjoint from neighborhoods of crossings of $D$. We say $L$ is \emph{tight} if for every region $R$ of $D$ such that $L\cap R\neq \emptyset$, $L\cap R$ is an arc properly embedded in $R$ with endpoints on distinct edges of $P(D)$ in $\partial R$.
\end{definition}

Every tight loop in $S^2$ is isotopic in $S^2$ to an embedded loop in $\Gamma_D$ via an isotopy that is transverse to the edges of $P(D)$ and disjoint from the vertices of $P(D)$.

\begin{lemma}
    Let $D$ be a link diagram. There exists a plain sphere coloring sequence that realizes $\rho(D)$ and consists entirely of tight loops.
\end{lemma}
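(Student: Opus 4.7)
The plan is to take an arbitrary plain sphere coloring sequence $\mathcal{L}=(L_1,\ldots,L_r)$ realizing $\rho(D)$ and replace each loop $L_i$ by a tight loop $L_i'$ performing the same coloring move at stage $i$. Because modifying $L_i$ does not disturb the validity of any other $L_k\in\mathcal{L}$, this suffices to yield a tight coloring sequence $\mathcal{L}'=(L_1',\ldots,L_r')$ also realizing $\rho(D)$.

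I work in the dual graph $\Gamma_D$ introduced just before the lemma. Any loop $L\subset S^2$ transverse to $P(D)$ and disjoint from crossings determines a closed walk $W_L$ in $\Gamma_D$, namely the cyclic sequence of regions it visits and edges of $P(D)$ it crosses; under this correspondence, $L$ is tight precisely when $W_L$ is a simple cycle. For the loop $L_i$, which performs a coloring move at stage $i$ for some strand $s_j$, the walk $W_i$ traverses exactly one edge $e^\star$ of $\Gamma_D$ corresponding to the intersection point $L_i\cap s_j$ (by $|L_i\cap s_j|=1$), and every other edge of $W_i$ corresponds to a strand colored by stage $i$.

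The key combinatorial step is to extract from $W_i$ a simple cycle $C_i$ containing $e^\star$ and using only edges that already appear in $W_i$. I do this by iteratively shortcutting repeated vertices: whenever a vertex $v$ occurs at two distinct positions of $W_i$, the two sub-walks from $v$ back to $v$ obtained by cutting $W_i$ at these positions partition the edges of $W_i$, so exactly one of them contains $e^\star$. I discard the sub-walk that does not contain $e^\star$, producing a strictly shorter closed walk that still contains $e^\star$; iterating until no vertex is repeated yields the simple cycle $C_i$, which uses only edges that correspond either to $s_j$ (namely $e^\star$) or to strands colored by stage $i$.

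Finally, I realize $C_i$ as an embedded simple closed curve $L_i'\subset S^2$ by choosing, in each region visited by $C_i$, a properly embedded arc from the entering edge of $P(D)$ to the exiting edge, and concatenating these arcs across the edges of $P(D)$ at transverse crossing points disjoint from the crossings of $D$. By construction, $L_i'$ meets each region of $C_i$ in a single arc with endpoints on distinct edges of $P(D)$, so $L_i'$ is tight; moreover $L_i'$ crosses $s_j$ exactly once (at $e^\star$) and otherwise only meets strands colored by stage $i$, so $L_i'$ witnesses the same coloring move as $L_i$. The primary subtlety will be to verify that the shortcutting step never discards $e^\star$, which is automatic from the dichotomy noted above, and to ensure that realizing the abstract cycle as an embedded curve produces something transverse to $P(D)$ and disjoint from crossings; both are straightforward since $\Gamma_D$ is embedded in $S^2$ and arcs within region interiors avoid crossings by design.
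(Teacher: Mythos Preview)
Your argument is correct and takes a genuinely different route from the paper. The paper argues by minimality: starting from a (disjoint) coloring sequence minimizing $\tau(\mathcal{L})=\sum |L_i\cap D|$, it shows that any loop meeting a region in two arcs can be surgered along an arc in that region to a loop with strictly smaller $|L_i\cap D|$ that still colors the same strand, and then separately rules out arcs with both endpoints on a single edge. Your proof is instead constructive and combinatorial: you pass to the dual graph $\Gamma_D$, encode each $L_i$ as a closed walk, and extract a simple cycle through the distinguished edge $e^\star$ by repeatedly excising sub-walks between repeated vertices. The two proofs are morally dual---one iterated geometric surgery, the other iterated graph shortcutting---but yours avoids the minimality setup and does not rely on first arranging the loops to be pairwise disjoint (Lemma~3.1), which the paper invokes. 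The paper's approach, on the other hand, stays closer to the geometric picture used elsewhere and makes the reduction in intersection number explicit.

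One point worth stating explicitly in your write-up: after shortcutting to a closed walk with no repeated vertices, you should note why no edge is repeated either (so that the result really is a simple cycle, hence tight). For length $\geq 3$ this is automatic from distinctness of vertices; for length $2$ it follows because $e^\star$ is the \emph{only} traversal of its edge in $W_i$ (since $|L_i\cap s_j|=1$), so the two edges cannot coincide; and length $1$ does not occur since a link projection has no bridges and hence $\Gamma_D$ has no loop edges. With that small addition your argument is complete.
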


\begin{proof}
Given a plain sphere coloring sequence $\mathcal{L}$ for a diagram $D$, define $$\tau(\mathcal{L})=\sum_{L\in \mathcal{L}}|L\cap D|.$$

Let $\mathcal{L}$ be a plain sphere coloring sequence such that $\mathcal L$ realizes $\rho(D)$ and the loops in $\mathcal{L}$ are disjointly embedded. Assume that $\mathcal{L}$ minimizes $\tau(\mathcal{L})$ over all such sequences. By Definition~\ref{def:loopcolor}, for every $L_i\in \mathcal{L}$ and for every region $R$ of $D$, if $L_i\cap R\neq \emptyset$, then $L_i\cap R$ is a collection of arcs.

Suppose toward a contradiction that there exists $L_i\in \mathcal{L}$ and a region $R$ of $D$ such that $L_i \cap R$ consists of two or more arcs. If $L_i$ colors the strand $s$ of $D$ at stage $i$ of the coloring process, then $L_i\cap s$ is a single point. Let $\gamma_1$ and $\gamma_2$ be two distinct arcs of $L_i \cap R$ that are contained in the boundary of the same component of $R\setminus L_i$. Denote the closure of this component by $C$. Let $\alpha$ be an arc properly embedded in $C$ with one endpoint in $\gamma_1$ and one endpoint in $\gamma_2$. Let $N$ be a rectangular $I$-fibered neighborhood of $\alpha$ in $C$ such that $\partial N = \beta_1 \cup \beta_2 \cup \delta_1 \cup \delta_2$, where $\beta_i$ is embedded in $\gamma_i$ and $\mathring \delta_i \subset \mathring C$. Surger $L_i$ along $\alpha$ in the standard way: by removing $\beta_1$ and $\beta_2$ and gluing in $\delta_1$ and $\delta_2$. This results in two loops in $S^2$, $L'_i$ and $L''_i$, both of which are transverse to $D$ and disjoint from neighborhoods of crossings in $D$. Both $L'_i$ and $L''_i$ have non-trivial intersection with $D$, so $|L'_i \cap D|<|L_i \cap D|$ and $|L''_i \cap D|<|L_i \cap D|$. Exactly one of $L'_i$ and $L''_i$ has non-trivial intersection with $s$, say $L'_i$ does. All other points of intersection of $D$ with $L'_i$ and $L''_i$ are contained in colored strands. Then $\mathcal{L}'=(L_1,\dots,L_{i-1},L'_i, L_{i+1},\dots,L_m)$ is a plain sphere coloring sequence realizing $\rho(D)$ such that $\tau(\mathcal{L}')<\tau(\mathcal{L})$, a contradiction. Hence, for $\mathcal L$ which minimizes $\tau$, there is no loop $L_i \in \mathcal L$ which intersects a region of $D$ in more than one arc.

It remains to show that for a loop $L_i \in \mathcal L$ which non-trivially intersects a region $R$, the arc $L_i \cap R$ has endpoints on distinct edges of $P(D)$. Suppose $L_i \cap R$ has endpoints on the same edge, $\alpha$, of the projection. Then there is a region $R'$ adjacent to $R$ which shares edge $\alpha$. The loop $L_i$ intersects $\alpha$ in two points, and we know that $L_i \cap R'$ is one arc. Then $L_i$ must be an embedded loop which is entirely contained in $R$ and $R'$ and which intersects $D$ in exactly two points. These two points lie on the same strand of $D$ so are either both colored or both uncolored. In both cases, $L_i$ is not a valid loop in the sense of Definition~\ref{def:loopcolor}. We conclude that $L_i \cap R$ has endpoints on distinct edges of $P(D)$, and that for a sequence $\mathcal L$ which realizes $\rho(D)$, if $\mathcal L$ minimizes $\tau$, then $\mathcal L$ consists entirely of tight loops.
\end{proof}

Consequently, $\rho(D)$ can be realized by a sequence of loops represented by embedded cycles in $\Gamma_D$, an approach suggested by Nathan Dunfield. In particular, computing $\rho(D)$ for a fixed link diagram $D$ is algorithmic. The forthcoming version 3.3 of SnapPy~\cite{SnapPy} will include a feature computing $\rho(D)$. A computation performed using this feature shows that over 600 diagrams in the knot table through 16 crossings have plain sphere number strictly less than Wirtinger number~\cite{Nathan2025email}.

We conclude by pointing out that our plain spheres generalize the connected sum spheres which appear in the study of the visual primeness of links; see~\cite{menasco1984closed, cromwell1993positive, feller2024homogeneous}.
Of course, not all relators in the group of a link are necessarily witnessed by plain spheres in a given diagram $D$. This is readily seen to be the case, even when $D$ is a diagram of the unknot. Since any finite set of meridians of a link can be realized as Wirtinger meridians in some diagram, one can regard the meridional rank conjecture as positing equality between the bridge number and a certain elusive sphere number of links, allowing immersed spheres which more easily evade the eye.

\newpage
\section{Figures}\label{sec:poem}

\noindent {\it Caught in tumbleweed, in perpetual slumber}\\
\noindent {\it They roll: round, squished, immersed, cucumber}\\
\noindent {\it Elusive, blistering, fierce}\\
\noindent {\it Faint! Diabolical! Spheres!}\\
\noindent {\it Does each breed breed the bridge number?}\\

\section*{Acknowledgements}
RB is partially supported by NSF grant DMS 2424734. AK and EP are partially supported by NSF grant DMS 2204349. We are particularly grateful to Nathan Dunfield for implementing a computation of the plain sphere number of a diagram in SnapPy and comparing the performance of the plain sphere and Wirtinger numbers on minimal diagrams through 16 crossings.

\bibliographystyle{plain}

\end{document}